\newtheorem{assumption}{Assumption}
  \def\clap#1{\hbox to 0pt{\hss#1\hss}}
\providecommand{\mat}[1]{\bm{#1}}%
\renewcommand{\vec}[1]{\mathbf{#1}}
\providecommand{\mA}{\ensuremath{\mat{A}}}
\providecommand{\va}{\ensuremath{\vec{a}}}
\providecommand{\vc}{\ensuremath{\vec{c}}}
\providecommand{\vk}{\ensuremath{\vec{k}}}
\providecommand{\vm}{\ensuremath{\vec{m}}}
\providecommand{\vp}{\ensuremath{\vec{p}}}
\providecommand{\vq}{\ensuremath{\vec{q}}}
\providecommand{\vt}{\ensuremath{\vec{t}}}
\providecommand{\vv}{\ensuremath{\vec{v}}}
\providecommand{\vx}{\ensuremath{\vec{x}}}
\providecommand{\vz}{\ensuremath{\vec{z}}}
\newcommand{\ip}[2]{\langle{#1}, {#2}\rangle}
\newcommand{\Prob}[1]{\mathbb{P}\left[#1\right]}
\newcommand{\bmat}[1]{\begin{bmatrix}#1\end{bmatrix}}
\newcommand{\sign}[1]{\operatorname{sign}\left(#1\right)}
\DeclareMathOperator{\R}{\mathbb{R}}
\DeclareMathOperator{\base}{base}
\newcommand{\interior}[1]{\operatorname{int}(#1)}
\newcommand{\conv}[1]{\operatorname{conv}(#1)}
\newcommand{\diam}[1]{\operatorname{diam}(#1)}
\newcommand{\vertices}[1]{\operatorname{vert}(#1)}
\newcommand{\Px}[1]{\mathbb{P}_{\vx}\left[\,#1\,\right]}
\newcommand{\Pv}[1]{\mathbb{P}_{\vv}\left[\,#1\,\right]}
\newenvironment{customthm}[1]
  {\innercustomthm}
  {\endinnercustomthm}
\newenvironment{customcor}[1]
  {\innercustomcor}
  {\endinnercustomcor}
\begin{document}

\title{A randomized algorithm for enumerating zonotope vertices
}


\author{Kerrek Stinson\and
David F.~Gleich\and
Paul G.~Constantine
}


\institute{K. Stinson \at
Colorado School of Mines, Golden, CO 80401\\
\email{kstinson@mines.edu}
\and
D.F. Gleich \at
Purdue University, West Lafayette, IN 47907\\
\email{dgleich@purdue.edu}
\and
P.G. Constantine \at
Colorado School of Mines, Golden, CO 80401\\
\email{paul.constantine@mines.edu}
}

\date{Received: date / Accepted: date}

\maketitle

\begin{abstract}
We propose a randomized algorithm for enumerating the vertices of a zonotope, which is a low-dimensional linear projection of a hypercube. 
The algorithm produces a pair of the zonotope's vertices by sampling a random linear combination of the zonotope generators, where the combination's weights are the signs of the product between the zonotope's generator matrix and random vectors with normally distributed entries. 
We study the probability of recovering particular vertices and relate it to the vertices' normal cones. This study shows that if we terminate the randomized algorithm before all vertices are recovered, then the convex hull of the resulting vertex set approximates the zonotope. In high dimensions, we expect the enumeration algorithm to be most appropriate as an approximation algorithm---particularly for cases when existing methods are not practical.

\keywords{zonotope \and randomized algorithm \and vertex enumeration}
\end{abstract}

\section{Introduction}
\label{sec:intro}

A \emph{zonotope} is a convex, centrally symmetric polytope that is the $n$-dimensional linear projection of an $m$-dimensional hypercube. More precisely, let $\mA\in\mathbb{R}^{n\times m}$ with $n<m$, and define the zonotope $Z\subset\mathbb{R}^n$ as
\begin{equation}
\label{eq:zono}
Z \;=\; Z(\mA) \;=\; \left\{\,
\mA\,\vx \,\mid\, \vx \in [-1,1]^m
\,\right\}.
\end{equation}
Figure \ref{fig:cube} shows a zonotope with $m=3$ and $n=2$. The zonotope has an equivalent representation as a Minkowski sum of $m$ line segments in $\mathbb{R}^n$. Let $\va_i\in\mathbb{R}^n$ be the $i$th column of $\mA$. Recall that the Minkowski sum $P+Q$ of sets $P$ and $Q$ is $P+Q = \{\,\vp+\vq\,\mid\,\vp\in P,\,\vq\in Q\,\}$. Then
\begin{equation}
\label{eq:zonomink}
Z \;=\; A_1 + \cdots + A_m,
\end{equation}
where $A_i\subset\mathbb{R}^n$ is defined as
\begin{equation}
\label{eq:lineseg}
A_i \;=\; \{\,\gamma\,\va_i \,\mid\, \gamma\in[-1,1] \,\}, 
\qquad i=1,\dots,m.
\end{equation}
The vectors $\{\va_i\}$ are called the \emph{generators} of $Z$.

\begin{figure}[t!]
\centering
\includegraphics[width=0.35\linewidth]{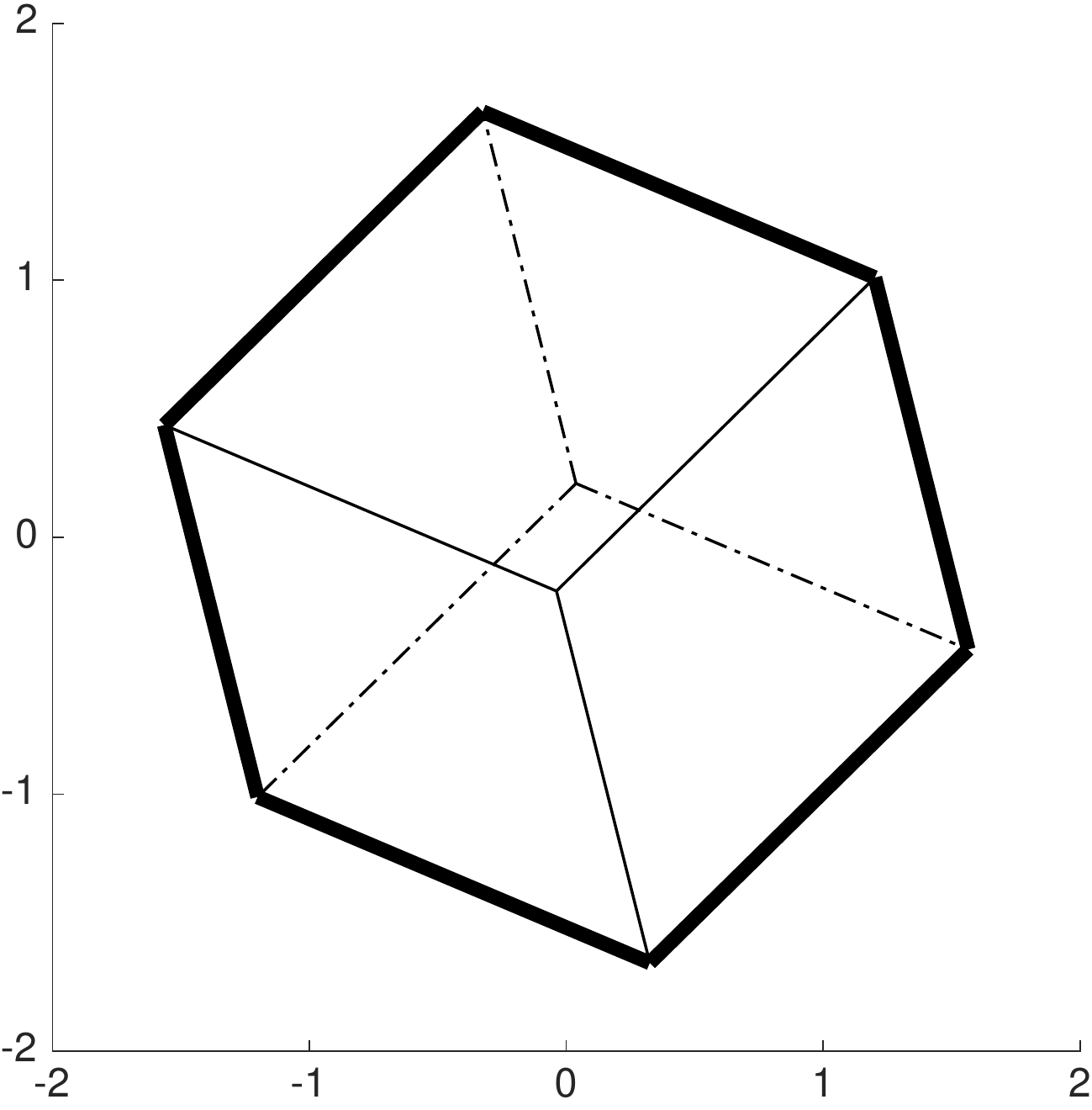}
\caption{A three-dimensional cube $[-1,1]^3$ rotated and photographed. The dotted lines show the cube's edges in the background. The thick lines show the boundary of the two-dimensional zonotope.}
\label{fig:cube}
\end{figure}

\subsection{Applications of zonotopes}
Zonotopes serve as bounding volumes in collision detection~\cite{Guibas2003} and aid fault diagnosis in control systems~\cite{Scott2014}. They are also helpful for understanding solutions to underdetermined linear systems found in compressed sensing~\cite{Donoho2010}. 
Other applications require explicit enumeration of the zonotope vertices, including the fixed rank integer quadratic program~\cite{Ferrez2005}, $L_1$-subspace signal processing~\cite{Markopoulos2014}, and linear regression models with interval data~\cite{Cerny2013}. 

Our particular interest in zonotope construction arises in the context of approximating functions of several variables with \emph{generalized ridge functions}~\cite{pinkus2015}. Define the function $f:[-1,1]^m\rightarrow\mathbb{R}$, and consider an approximation of the form
\begin{equation}
\label{eq:ridge}
f(\vx) \;\approx\; g(\mA\,\vx),\qquad \vx\in[-1,1]^m,
\end{equation}
where $\mA$ has orthogonal rows, and $g(\cdot)$ is a function from $\mathbb{R}^n$ to $\mathbb{R}$. The rows of $\mA$ are the first $n$ eigenvectors of a symmetric positive semidefinite matrix derived from $f$'s gradient; they define $f$'s \emph{active subspace}~\cite{asmbook}. To exploit the approximation \eqref{eq:ridge}, one must explicitly represent $g$'s domain, which is a zonotope.

\subsection{Algorithms for vertex enumeration}

The zonotope's vertices are a subset of the corners of the hypercube---i.e., $m$-vectors with elements in $\{-1,1\}$---projected with the matrix $\mA$. A straightforward algorithm for zonotope vertex enumeration is to use convex hull algorithm such as Quickhull~\cite{Barber1996} to compute the convex hull of the set $\left\{\mA\,\vv\,\mid\,\vv\in\{-1,1\}^m\right\}$. However, the complexity of this approach scales exponentially in $m$, since there are $2^m$ vertices of the hypercube, so this is not practical for large $m$. 

An alternative approach developed by Ferrez, et al.~\cite{Ferrez2005} based on \emph{reverse search}~\cite{Avis1996} has time complexity $\mathcal{O}(m\,n\,\text{LP}(m,n)\,|V|)$, where (i) $\text{LP}(m,n)$ is the complexity of solving a linear program with $m$ inequalities in $n$ variables, (ii) $V$ is the set of zonotope vertices, and (iii) $|V|$ is the number of zonotope vertices. This algorithm admits an efficient parallel implementation~\cite{Weibel2010}, and open source software is available~\cite{libzonotope}. An alternative to explicit construction is to approximate the zonotope with an ellipse via Goffin's algorithm~\cite{Cerny2012}. However, numerical implementation of this approach appears problematic, and we know of no existing implementations. 

\subsection{A randomized algorithm for enumeration}

We propose and analyze a randomized algorithm for constructing a zonotope $Z$ via vertex enumeration given its generators in the form of an $n\times m$ matrix $\mA$. Algorithm \ref{alg:vertenum} contains the simplest form of the algorithm, which is appropriate under some mild conditions on $\mA$. The algorithm relies on characterizing a zonotope vertex as $\mA\sign{\mA^T\vx}$ for $\vx\in\mathbb{R}^n$, where $\sign{\vv}$ returns a vector of elements in $\{-1,1\}$ that correspond to signs of $\vv$'s components\footnote{We assume that $\mA$ is such that entries in $\vv$ are zero with probability zero.}; we derive this characterization in Section \ref{sec:zonotopes}.

\begin{algorithm}[h!]
\caption{A randomized algorithm to enumerate vertices of the zonotope $Z=Z(\mA)$ given generators $\mA$.}
\label{alg:vertenum}
\begin{algorithmic}
\STATE{Let $k$ be the number of $Z$'s vertices.}
\STATE{Initialize the empty set of vertices $V=\varnothing$.}
\WHILE{$|V| < k$}
\STATE{Draw $\vx$ independently from a standard Gaussian distribution on $\mathbb{R}^n$.}
\STATE{Compute $\vv_+=\mA\sign{\mA^T\vx}$ and $\vv_-=-\vv_+$.}
\IF{$V$ does not contain $\vv_+$}
\STATE{Add $\vv_+$ and $\vv_-$ to $V$.}
\ENDIF
\ENDWHILE
\end{algorithmic}
\end{algorithm}

If Algorithm \ref{alg:vertenum} is terminated before $|V|=k$, the convex hull of the vertices in $V$ is contained within $Z$. 
In Section \ref{sec:alg}, we derive a lower bound on the number of random samples needed such that the Hausdorff distance between $Z$ and the convex hull of $V$ is below a user-specified with high probability. Section \ref{sec:exp} presents several illustrative numerical experiments.

\subsection{Notation}

We use the following notation conventions throughout the paper. Uppercase letters (e.g., $A$) are sets; uppercase boldface letters (e.g., $\mA$) are matrices; and lowercase boldface letters (e.g., $\va$) are vectors. For $A \subset \R^n$, $\interior{A}$, $\conv{A}$, and $\diam{A}$ are the interior,  convex hull, and diameter of $A$, respectively. For a polytope $P$, $\vertices{P}$ returns the set of $P$'s vertices. Two vertices of a polytope are adjacent if there is an edge connecting them. The inner product between vectors $\vp$ and $\vq$ is $\ip{\vp}{\vq}$. 

\section{Characterizing a zonotope vertex}
\label{sec:zonotopes}


The statements in this section and the next section utilize the following set of assumptions.
\begin{assumption} \label{ass:big}
 The $n\times m$ matrix $\mA=\bmat{\va_1&\cdots &\va_m}$ generates the zonotope $Z$ as in \eqref{eq:zono} and \eqref{eq:zonomink} with $m \ge n$.
 We assume that (i) $\mA$ has no columns of all zeros and (ii) no two columns of $\mA$ are scalar multiples of each other. 
\end{assumption}
Together, these assumptions imply that the zonotope is in general position.


Before introducing the main result of this section, we recall a theorem from Ferrez, et al.~\cite{Ferrez2005} that counts the number of vertices for a zonotope in general position. 

\begin{theorem}[Theorem 3.1~\cite{Ferrez2005}]
\label{thm:numvert}
The number $|\vertices{Z(\mA)}|$ of vertices of the zonotope $Z(\mA)$ satisfies
\begin{equation}
\label{eq:numvert}
|\vertices{Z(\mA)}| \;\leq\; 2\sum_{i=0}^{n-1} {m-1 \choose i},
\end{equation}
where equality is attained if $Z(\mA)$ is in general position.
\end{theorem}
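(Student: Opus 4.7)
The plan is to reduce vertex counting for $Z(\mA)$ to counting regions of the central hyperplane arrangement $\mathcal{H} = \{H_i\}_{i=1}^m$ with $H_i = \{\vx \in \mathbb{R}^n : \ip{\va_i}{\vx} = 0\}$, then apply a standard recursive bound. First I would establish that the vertices of $Z$ are in bijection (under general position) with the full-dimensional regions of $\mathcal{H}$.

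For the bijection, observe that every vertex $\vv$ of $Z$ is exposed: there exists $\vx \in \mathbb{R}^n$ with $\ip{\vx}{\vv} > \ip{\vx}{\vy}$ for all $\vy \in Z \setminus \{\vv\}$. A small perturbation (permitted because no column of $\mA$ is zero by Assumption~\ref{ass:big}) yields $\ip{\va_i}{\vx} \neq 0$ for every $i$ without losing strict maximality. Since $\ip{\vx}{\mA\vs} = \sum_i s_i \ip{\va_i}{\vx}$ is maximized over $\vs \in [-1,1]^m$ precisely by $s_i = \sign{\ip{\va_i}{\vx}}$, we obtain $\vv = \mA\sign{\mA^T\vx}$. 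Conversely, for any $\vx$ off $\bigcup_i H_i$ the point $\mA\sign{\mA^T\vx}$ is exposed by the same functional, and hence is a vertex. Two such $\vx$ give the same sign pattern if and only if they lie in the same connected component of $\mathbb{R}^n \setminus \bigcup_i H_i$; under general position the map $\vs \mapsto \mA\vs$ is injective on realizable patterns, because no nontrivial $\pm 1$ combination of the $\va_i$ can vanish. Hence $|\vertices{Z}|$ is at most the number of regions of $\mathcal{H}$, with equality in general position.

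Next I would bound $r(m,n)$, the number of regions of a central arrangement of $m$ hyperplanes through the origin in $\mathbb{R}^n$. Using deletion--restriction, remove $H_m$: the remaining $m-1$ hyperplanes create at most $r(m-1,n)$ regions, and reintroducing $H_m$ splits exactly those regions it meets, which correspond to regions of the induced arrangement in $H_m \cong \mathbb{R}^{n-1}$, of cardinality at most $r(m-1,n-1)$. This yields the recursion
\begin{equation}
r(m,n) \;\leq\; r(m-1,n) + r(m-1,n-1),
\end{equation}
with equality when the arrangement is generic.

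Finally I would verify by induction on $m$ that $2\sum_{i=0}^{n-1}\binom{m-1}{i}$ satisfies this recursion, invoking Pascal's identity $\binom{m-1}{i} = \binom{m-2}{i} + \binom{m-2}{i-1}$, with the straightforward base cases $r(1,n)=2$ and $r(m,1)=2$. The main obstacle is the bijection step: carefully confirming that, in general position, distinct realizable sign patterns produce distinct vertices and that the perturbation placing $\vx$ off $\bigcup_i H_i$ preserves strict maximization. I expect both points to reduce to a short convex-geometry argument leveraging Assumption~\ref{ass:big}, after which the arrangement combinatorics delivers the stated bound and its sharpness.
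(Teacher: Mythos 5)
The paper does not supply its own proof of this statement: it is recalled verbatim as Theorem~3.1 of Ferrez et al.\ \cite{Ferrez2005}, so there is no in-paper argument to compare against. Your proposal, however, is the standard route (reduce to counting regions of the central arrangement $\mathcal H=\{\vx:\ip{\va_i}{\vx}=0\}$, then apply deletion--restriction and Pascal's identity), and it is essentially sound; the base cases and the recursion check out, and the bijection with arrangement regions is exactly the mechanism underlying the paper's Theorems~\ref{thm:zono} and \ref{thm:normalCone}.

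One step is mis-justified, though the conclusion survives. You claim injectivity of $\vs\mapsto\mA\vs$ on realizable sign patterns ``because no nontrivial $\pm1$ combination of the $\va_i$ can vanish'' in general position. That reasoning only covers combinations supported on at most $n$ columns; a $\{-1,0,1\}$ kernel vector supported on more than $n$ columns is not excluded by having every $n$ columns linearly independent (e.g.\ $\va_1+\va_2-\va_3=0$ with four generic generators in $\R^2$, no two parallel). The fix, and the cleaner argument, is that injectivity on realizable sign patterns holds unconditionally: if $\vs_1=\sign{\mA^T\vx_1}$ with $\vx_1\notin H$ and $\vs_2\neq\vs_1$ is any other sign vector, then
\begin{equation}
\ip{\vx_1}{\mA\vs_1-\mA\vs_2}\;=\;\sum_{i:\,s_{1,i}\neq s_{2,i}} 2\,s_{1,i}\ip{\va_i}{\vx_1}\;=\;\sum_{i:\,s_{1,i}\neq s_{2,i}} 2\,\bigl|\ip{\va_i}{\vx_1}\bigr|\;>\;0,
\end{equation}
so $\mA\vs_1\neq\mA\vs_2$. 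Hence $|\vertices{Z}|$ equals the number of regions of $\mathcal H$ exactly, not merely at most; general position enters only to ensure the arrangement itself attains the maximal region count $2\sum_{i=0}^{n-1}\binom{m-1}{i}$. With that correction the proof is complete.
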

The calculation in \eqref{eq:numvert} produces $k$ in Algorithm \ref{alg:vertenum}. 

The following theorem and two corollaries are the main results of this section. All proofs are found in Appendix \ref{app:0}; the proof of Theorem \ref{thm:zono} leverages a result from Fukuda~\cite[Corollary 2.2]{Fukuda2004}.

\begin{theorem} 
\label{thm:zono} 
Under Assumption~\ref{ass:big}, for $\vx \in \R^n$ such that $\mA^T \vx$ has all nonzero components, the point $\vv$ defined as
\begin{equation}
\label{eq:vert}
\vv \;=\; \vm(\vx) \;:=\; \mA\sign{\mA^T\vx}
\end{equation}
is a vertex of the zonotope $Z(\mA)$. 
\end{theorem}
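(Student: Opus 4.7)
The plan is to exhibit $\vv$ as the unique maximizer on $Z$ of the linear functional $\ell(\vz) = \langle \vx, \vz \rangle$. Since the unique maximizer of a linear functional over a convex polytope is necessarily an extreme point, and since the extreme points of a polytope are exactly its vertices, this immediately yields the claim. This approach bypasses the oriented-matroid machinery that underlies Fukuda's characterization and instead works directly with the defining parametrization of $Z$.

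First, I would rewrite $\ell$ using the parametrization of the zonotope: for any $\vz = \mA\vy$ with $\vy \in [-1,1]^m$,
\[
\ell(\vz) \;=\; \langle \vx,\, \mA\vy \rangle \;=\; (\mA^T\vx)^T \vy \;=\; \sum_{i=1}^m (\mA^T\vx)_i\, y_i.
\]
Because every coordinate $(\mA^T\vx)_i$ is nonzero by hypothesis, the right-hand side is a linear function of $\vy$ on the box $[-1,1]^m$ whose coefficients are all nonzero, so it is maximized strictly at $\vy^\star = \sign{\mA^T\vx}$, with maximum value $M := \sum_i |(\mA^T\vx)_i|$. The point $\vv = \mA\vy^\star$ therefore lies in $Z$ and attains $\ell(\vv) = M$.

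The final step is to verify that no other point of $Z$ attains this maximum value. Suppose $\vz = \mA\vy \in Z$ satisfies $\ell(\vz) = M$. Writing the equality $\sum_i (\mA^T\vx)_i y_i = \sum_i |(\mA^T\vx)_i|$ coordinatewise and combining with $|y_i|\le 1$ forces $y_i = \sign((\mA^T\vx)_i)$ at every index $i$ where $(\mA^T\vx)_i \neq 0$, which by hypothesis is every index. Hence $\vy = \vy^\star$ and $\vz = \vv$, so $\vv$ is the unique $\ell$-maximizer over $Z$ and is therefore a vertex.

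The main subtlety is the last step: since $\mA$ need not be injective on $[-1,1]^m$, uniqueness of the optimizing $\vy$ in the preimage box is not automatically uniqueness of the optimizing $\vz$ in $Z$. The argument I would use sidesteps this by showing that \emph{every} preimage of an $\ell$-maximizer must coincide with $\vy^\star$, so distinct preimages (if any) all map to the common image $\vv$. Beyond the nonzero condition on $\mA^T\vx$, Assumption~\ref{ass:big} plays no explicit role in this particular statement; the general-position hypotheses become important for the counting statement of Theorem~\ref{thm:numvert} and the corollaries that follow.
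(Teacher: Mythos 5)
Your proof is correct, but it takes a genuinely different route from the paper's. The paper invokes Fukuda's Corollary 2.2, which characterizes the extreme points of a Minkowski sum $P_1 + \cdots + P_k$ as exactly the sums $\vv_1 + \cdots + \vv_k$ of extreme points $\vv_i \in P_i$ that are simultaneously picked out as unique maximizers $\{\vv_i\} = S(P_i,\vc)$ by a common direction $\vc$; the paper then checks that $\vc = \vx$ does this for the segments $A_i$. You instead work directly with the parametrization $Z = \{\mA\vy : \vy \in [-1,1]^m\}$ and show that $\vv = \mA\,\sign{\mA^T\vx}$ is the \emph{unique} maximizer over $Z$ of $\vz \mapsto \ip{\vx}{\vz}$, which is enough because the maximizing set of a linear functional over a polytope is a face, and a singleton face is a vertex. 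Your handling of the non-injectivity of $\mA$ is the right way to close the argument: the key equality $\sum_i (\mA^T\vx)_i\,y_i = \sum_i |(\mA^T\vx)_i|$ together with $|y_i|\le 1$ and the nonvanishing of each coefficient pins down $y_i$ coordinatewise, so every preimage of a maximizing $\vz$ collapses to $\vy^\star$, forcing $\vz = \vv$. In effect you are reproving the ``if'' direction of Fukuda's theorem in the special case where each summand is a segment, which is more elementary and self-contained; the paper's choice to cite Fukuda pays off later, since the ``only if'' direction is exactly what drives Corollary~\ref{cor:mapping} (surjectivity of $\vm$) and Corollary~\ref{cor:vert-}, which your argument does not immediately give. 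Your observation that Assumption~\ref{ass:big} is not used here beyond guaranteeing that the hypothesis on $\vx$ is satisfiable is also accurate and matches the role it plays in the paper's own proof.
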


\begin{corollary}
\label{cor:vert-}
If $\vv$ is a vertex of the zonotope $Z(\mA)$, then so is $-\vv$.
\end{corollary}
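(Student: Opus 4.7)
The plan is to exploit the fact that the zonotope is centrally symmetric about the origin, which follows immediately from its definition. Since $[-1,1]^m = -[-1,1]^m$, the map $\vx \mapsto -\vx$ is a bijection of the hypercube onto itself, and composing with the linear map $\mA$ gives $Z(\mA) = -Z(\mA)$. So the first step is to record this observation: negation is a linear bijection of $Z$ onto $Z$.

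The second step is to invoke the standard fact that affine bijections of a convex polytope onto itself permute the vertices. The cleanest way to do this here is by a short contradiction argument using the definition of a vertex as an extreme point: if $-\vv \in Z$ were not a vertex, we could write $-\vv = t\,\vy_1 + (1-t)\,\vy_2$ for some $\vy_1, \vy_2 \in Z$ with $\vy_1 \neq \vy_2$ and $t \in (0,1)$. Negating both sides yields $\vv = t(-\vy_1) + (1-t)(-\vy_2)$, and central symmetry guarantees $-\vy_1, -\vy_2 \in Z$ with $-\vy_1 \neq -\vy_2$, contradicting the assumption that $\vv$ is a vertex of $Z$.

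An alternative that stays closer to the language of Theorem~\ref{thm:zono} would be to write any vertex in the form $\vv = \vm(\vx) = \mA\,\sign{\mA^T \vx}$ for some $\vx$ with $\mA^T \vx$ having no zero components, and then observe $-\vv = \mA\,\sign{\mA^T(-\vx)} = \vm(-\vx)$, so another application of Theorem~\ref{thm:zono} produces $-\vv$ as a vertex. This route requires knowing that every vertex is of the form $\vm(\vx)$, which is a converse to Theorem~\ref{thm:zono} that the excerpt does not explicitly assert, so the central symmetry argument is the more self-contained choice.

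I do not anticipate any real obstacle. The only subtlety is simply to avoid over-complicating the argument: the proof essentially consists of one line ($Z = -Z$) plus the extreme-point definition of a vertex, and the main pitfall would be forgetting that the affine image of a vertex under a bijection $Z \to Z$ is automatically a vertex.
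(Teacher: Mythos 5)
Your first argument is correct and takes a genuinely different route from the paper. The paper's proof is a one-liner that says the corollary ``follows immediately from Fukuda's result, Theorem~\ref{thm:fukuda}'': if $\vv = \vv_1 + \cdots + \vv_m$ with $\{\vv_i\} = S(A_i,\vc)$, then since each $A_i$ is a segment symmetric about the origin, $\{-\vv_i\} = S(A_i,-\vc)$, so $-\vv$ satisfies Fukuda's criterion with witness $-\vc$. Your approach instead uses central symmetry of $Z$ plus the extreme-point definition of a vertex directly, bypassing Fukuda entirely. Yours is more elementary and more general---it applies verbatim to any centrally symmetric convex body and requires no Minkowski-decomposition machinery---whereas the paper's appeal to Fukuda is essentially free given that the theorem is already on the table for Theorem~\ref{thm:zono}. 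Your instinct to reject the $\vm(\vx)$ route was also sound, and for exactly the reason you give: surjectivity of $\vm$ onto the vertex set is only established in Corollary~\ref{cor:mapping}, so invoking it here would be circular relative to the paper's ordering.
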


\begin{corollary} 
\label{cor:mapping}
Under Assumption~\ref{ass:big} and for $\vm(\vx)$ as in \eqref{eq:vert}, define $H\subset\R^n$ as
\begin{equation}
\label{eq:H}
H \;=\; \bigcup_{i=1}^m \{\, \vx\in \R^n\,\mid\, \ip{\va_i}{\vx} = 0 \,\}.
\end{equation}
The mapping $\vm:\R^n \setminus H\to \vertices{Z(\mA)}$ is well defined and onto.
\end{corollary}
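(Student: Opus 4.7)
The plan is to dispatch well-definedness immediately from Theorem~\ref{thm:zono}, and then prove surjectivity by a normal-cone / supporting-functional argument.

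\textbf{Well-definedness.} For any $\vx \in \R^n \setminus H$, the definition of $H$ in \eqref{eq:H} ensures that $\ip{\va_i}{\vx} \ne 0$ for every $i$, so $\mA^T \vx$ has no zero components. Theorem~\ref{thm:zono} then says that $\vm(\vx)$ is a vertex of $Z(\mA)$. This is immediate and requires no further work.

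\textbf{Surjectivity.} The strategy is: given a vertex $\vv$, exhibit a direction $\vx$ in the (relative) interior of its normal cone in $Z$, perturb so that $\vx \notin H$, and then compute that $\vm(\vx) = \vv$ by interpreting $\ip{\cdot}{\vx}$ as a linear functional on $Z$. Concretely, I would first fix an arbitrary $\vv \in \vertices{Z(\mA)}$. Since $\vv$ is a vertex of the convex polytope $Z$, it is an exposed point, so there exists $\vx_0 \in \R^n$ with
\begin{equation*}
\ip{\vv}{\vx_0} \;>\; \ip{\vy}{\vx_0} \quad \text{for every } \vy \in Z \setminus \{\vv\}.
\end{equation*}
Now rewrite the supporting linear functional using \eqref{eq:zono}:
\begin{equation*}
\max_{\vy \in Z}\,\ip{\vy}{\vx_0} \;=\; \max_{\vt \in [-1,1]^m} \ip{\mA\vt}{\vx_0} \;=\; \max_{\vt \in [-1,1]^m} \ip{\vt}{\mA^T\vx_0} \;=\; \sum_{i=1}^m |\ip{\va_i}{\vx_0}|,
\end{equation*}
which is attained uniquely precisely when $\mA^T \vx_0$ has no zero entries, in which case the argmax is $\vt^\star = \sign{\mA^T \vx_0}$ and yields $\vv = \mA \vt^\star = \vm(\vx_0)$. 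So the remaining task is to guarantee that such an $\vx_0$ can be chosen outside $H$.

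\textbf{The one place care is needed.} The set of strict maximizers of $\ip{\vy}{\cdot}$ at $\vv$ (i.e., the interior of the normal cone of $\vv$ in $Z$) is a nonempty open cone in $\R^n$, because $\vv$ is a vertex and $Z$ has full dimension under Assumption~\ref{ass:big} (the general-position hypothesis forces $\mA$ to have rank $n$, otherwise $Z$ would lie in a proper subspace and no two columns being scalar multiples together with $m \geq n$ would be incompatible with $Z$ having $k$ distinct vertices in the sense of \eqref{eq:numvert}). Since $H$ is a finite union of $m$ hyperplanes, it has empty interior in $\R^n$; hence any nonempty open cone contains points outside $H$. Pick such an $\vx \notin H$ in the interior of the normal cone of $\vv$: by the display above $\vm(\vx) = \vv$, proving surjectivity.

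The only genuine obstacle is the full-dimensionality of $Z$ (otherwise the normal cone argument needs to be run in the affine hull of $Z$ and the perturbation off $H$ done within that affine hull); once that is observed, the rest is a routine supporting-hyperplane / measure-zero complement argument.
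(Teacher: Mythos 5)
Your proof is correct, but it takes a genuinely different route from the paper's. The paper invokes Fukuda's characterization of vertices of a Minkowski sum (Theorem~\ref{thm:fukuda}): given a vertex $\vv$ of $Z = A_1 + \cdots + A_m$, Fukuda's corollary supplies a vector $\vc$ with $\{\vv_i\} = S(A_i, \vc)$ a singleton for every $i$, and the singleton condition immediately forces $\ip{\va_i}{\vc} \neq 0$, i.e.\ $\vc \notin H$, so $\vm(\vc) = \vv$ with no further work. You instead argue from first principles: a vertex of a polytope is an exposed point, so it admits a strict supporting functional $\vx_0$, and you then optimize $\ip{\cdot}{\vx_0}$ over $Z$ through the cube parametrization to read off $\vv = \mA\,\sign{\mA^T\vx_0}$. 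This is more self-contained (it does not lean on Fukuda's result) at the cost of being slightly longer. Both approaches are sound.

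Two remarks on streamlining. First, the ``perturb off $H$'' step is actually unnecessary: your own display shows that if $\mA^T\vx_0$ had a zero component $i$, then since $\va_i \neq 0$ by Assumption~\ref{ass:big} the maximizer of $\ip{\cdot}{\vx_0}$ over $Z$ would be an entire segment $\vv + [-1,1]\,\va_i$, not the single point $\vv$; but you chose $\vx_0$ to make $\vv$ the \emph{strict unique} maximizer, so $\vx_0 \notin H$ automatically, exactly as happens in the paper's proof. (Your measure-zero perturbation argument is of course also valid, just not needed.) Second, the full-dimensionality of $Z$ is a red herring: the set of strict supporting functionals at a vertex $\vv$ is the finite intersection of open half-spaces $\{\vx : \ip{\vv - \vw}{\vx} > 0\}$ over the other vertices $\vw$, hence open and nonempty in $\R^n$ whether or not $\mA$ has rank $n$ --- if $Z$ sits in a proper subspace $W$, the strict-support set is still full-dimensional because it contains $N_Z^W(\vv) + W^\perp$ shifted into strictness. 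So the parenthetical attempt to deduce $\operatorname{rank}(\mA) = n$ from Assumption~\ref{ass:big} (which, incidentally, that assumption does not in fact guarantee) can simply be dropped.
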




\section{A randomized algorithm for approximating a zonotope}
\label{sec:alg}

The zonotope vertex characterization in Theorem \ref{thm:zono} underpins the randomized algorithm in Algorithm \ref{alg:vertenum}. Next we study the algorithm's approximation properties. Namely, if Algorithm \ref{alg:vertenum} is terminated before $|V|=k$, then the convex hull of the set $V$ approximates the zonotope. We seek a lower bound on the number of random samples ($\vx$ in Algorithm \ref{alg:vertenum}) needed such that the Hausdorff distance between $Z$ and the convex hull of $V$ is below a user-specified tolerance with high probability.

\subsection{A probability measure on the zonotope vertices}
Let $\mathbb{P}_{\vx}$ be a probability measure on $\R^n$. Recall the definition of the set $H$ in \eqref{eq:H}, which is the set of vectors in $\R^n$ that are orthogonal to at least one of the generators $\{\va_i\}$. Note that $\Px{H}=0$ because the null-space of a column of $\mA$ is a subspace of dimension $n-1$. 
Therefore, by Corollary \ref{cor:mapping},
\begin{equation}
\Px{
\{\,\vx\in\R^n \,\mid\, \mA\sign{\mA^T\vx}\in\vertices{Z(\mA)} \,\}
} \;=\; 1.
\end{equation}
In other words, the probability that $\vx\in\R^n$ maps to some zonotope vertex is one. We now ask what the probability is that $\vx$ maps to a specific vertex. We show that this probability is characterized by a geometric feature of the vertex, namely its associated normal cone. Loosely speaking, for an appropriate $\mathbb{P}_{\vx}$ such as a standard Gaussian measure, a vertex gets mapped to more frequently if the zonotope is sharper at that vertex---and such vertices influence the convex hull more. If the zonotope is nearly flat around a vertex, then this vertex does not contribute much to the zonotope's definition; we show that the probability of mapping to these non-influential vertices is relatively low. Such insight offers hope for Algorithm \ref{alg:vertenum}'s viability as an approximation algorithm for high-dimensional cases when alternatives are impractical. 

A similar idea arises in recent work on a randomized algorithm for archetypal analysis of large data sets~\cite{damle2014random}, where the goal is to identify \emph{archetypes}---which are elements of the data set---such that the remaining points can be represented as convex combinations of the archetypes. We borrow heavily from their geometric analysis. 

In what follows, let $\vv$ be a vertex of the zonotope $Z=Z(\mA)$. Recall the \emph{normal cone} of $\vv$, denoted $N_Z(\vv)$, is 
\begin{equation} \label{eqn:ncone}
N_Z(\vv) \;=\; \{\,
\vx\in\R^n \,\mid\, \ip{\vx}{\vz-\vv}\leq 0 
\mbox{ for all $\vz\in Z$} 
\,\}.
\end{equation}
Figure \ref{fig:ncone} shows an example of a normal cone for a two-dimensional polytope's vertex. 

\begin{figure}[t!]
\centering
\includegraphics[width=0.55\linewidth]{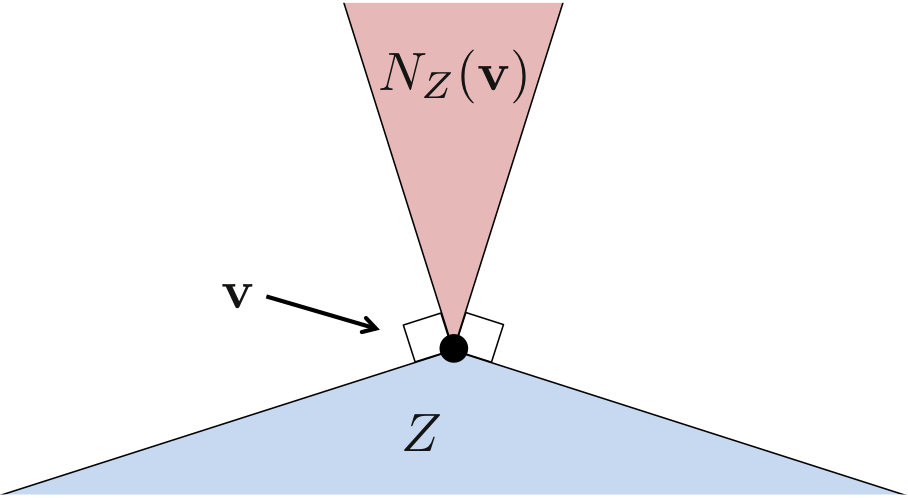}
\caption{The red shaded region shows an example of a normal cone $N_Z(\vv)$ for a vertex $\vv$ of the blue shaded polytope $Z$.}
\label{fig:ncone}
\end{figure}



\begin{theorem} 
\label{thm:normalCone}
Under Assumption~\ref{ass:big}, for $\vv\in\vertices{Z}$ and the mapping $\vm(\vx)$ from \eqref{eq:vert}, 
\begin{equation}
\vm^{-1}(\vv) \;=\; \interior{N_Z(\vv)}.
\end{equation}
\end{theorem}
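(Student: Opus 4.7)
The plan is to identify $\vm(\vx)$ with the unique linear-programming maximizer of $\ip{\vx}{\cdot}$ over $Z$, and then invoke the standard polytope fact that the interior of a vertex's normal cone consists of exactly those linear functionals uniquely maximized at that vertex. First I would observe that for any $\vx \in \R^n$,
\[
\max_{\vz \in Z}\, \ip{\vx}{\vz} \;=\; \max_{\vy \in [-1,1]^m}\, \ip{\mA^T\vx}{\vy},
\]
and when $\vx \notin H$, so that $\mA^T\vx$ has no zero components, the unique maximizer of the right-hand side over $[-1,1]^m$ is $\sign{\mA^T\vx}$. Hence $\vm(\vx) = \mA\sign{\mA^T\vx}$ is a maximizer of $\ip{\vx}{\cdot}$ over $Z$; uniqueness in $Z$ follows because any alternative maximizer $\vz' = \mA\vy'$ with $\vy' \in [-1,1]^m$ would force $\vy'$ to attain the right-hand maximum, whence $\vy' = \sign{\mA^T\vx}$ and $\vz' = \vm(\vx)$.

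The inclusion $\vm^{-1}(\vv) \subseteq \interior{N_Z(\vv)}$ then follows immediately: if $\vm(\vx) = \vv$, then $\vv$ is the unique maximizer of $\ip{\vx}{\cdot}$ over $Z$, and by the standard characterization this places $\vx$ in the interior of the (full-dimensional) normal cone $N_Z(\vv)$. For the reverse inclusion, let $\vx \in \interior{N_Z(\vv)}$. The same duality makes $\vv$ the unique maximizer of $\ip{\vx}{\cdot}$ over $Z$, so it suffices to show $\vx \notin H$, after which the preceding paragraph gives $\vm(\vx) = \vv$. Suppose for contradiction that $(\mA^T\vx)_i = 0$ for some $i$. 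Setting $\vy^*_j = \sign{(\mA^T\vx)_j}$ for $j \neq i$ and letting the $i$-th coordinate vary freely produces a one-parameter family of maximizers in $[-1,1]^m$; its image in $Z$ is a segment in the direction of $\va_i$, nondegenerate because $\va_i \neq 0$ by Assumption~\ref{ass:big}. This contradicts the uniqueness of $\vv$ as a maximizer.

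The only real obstacle is the standard polytope duality ``$\vx \in \interior{N_Z(\vv)}$ if and only if $\vv$ is the unique maximizer of $\ip{\vx}{\cdot}$ over $Z$''---which relies on $Z$ being full-dimensional, an implicit strengthening of Assumption~\ref{ass:big}---together with the segment-of-maximizers argument above, which is the only place the zonotope-specific structure is used. The latter in particular ensures $\interior{N_Z(\vv)} \cap H = \emptyset$, so that $\vm$ is defined on all of $\interior{N_Z(\vv)}$ and the two sets genuinely coincide.
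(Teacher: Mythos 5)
Your proof is correct, and it follows the same structural blueprint as the paper's: both reduce the claim to the characterization of $\interior{N_Z(\vv)}$ as exactly those $\vx$ for which $\vv$ is the \emph{unique} maximizer of $\ip{\vx}{\cdot}$ over $Z$. The paper does not cite this as standard but proves it in Lemma~\ref{lem:interiorNormal}, where the content is that $N_Z(\vv)$ is cut out by the finitely many hyperplanes coming from the other vertices, so interior commutes with the intersection. Where you genuinely diverge is in how you identify the preimage under $\vm$: the paper argues coordinatewise with the sign coefficients $k_i$, exhibiting for each $i$ a test point $\sum_{j\neq i}k_j\va_j$ to extract the sign of $\ip{\va_i}{\vp}$, and in the reverse direction decomposing $\vz-\vv=\sum\gamma_ik_i\va_i$ with $\gamma_i\le 0$. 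You instead pull the optimization back to the cube via $\max_{\vz\in Z}\ip{\vx}{\vz}=\max_{\vy\in[-1,1]^m}\ip{\mA^T\vx}{\vy}$, where the unique maximizer is transparently $\sign{\mA^T\vx}$ when $\mA^T\vx$ has no zeros, and you handle $\vx\in H$ by exhibiting a nondegenerate segment of maximizers in the direction $\va_i$. The cube-duality framing is a bit slicker and makes the role of Assumption~\ref{ass:big}(i) explicit; the paper's version is more self-contained because it proves rather than cites the interior-of-normal-cone lemma. One small correction: your caveat that the normal-cone characterization ``relies on $Z$ being full-dimensional'' is unneeded. The finite-intersection argument in Lemma~\ref{lem:interiorNormal} uses only that $Z$ has finitely many vertices, and the normal cone of a vertex is automatically full-dimensional in $\R^n$ regardless of whether $Z$ is (it contains the orthogonal complement of the linear span of $Z-\vv$ as a direct summand), so no implicit strengthening of Assumption~\ref{ass:big} is being used.
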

The proof of Theorem \ref{thm:normalCone} is in Appendix \ref{app:A}. This theorem enables us to construct a probability measure $\mathbb{P}_{\vv}$ on the vertices of the zonotope $Z$. For $\vv_i\in\vertices{Z}$, define
\begin{equation}
\label{eq:probv}
\begin{aligned}
\Pv{\vv_i} 
&:= \Px{N_Z(\vv_i)} \\ 
&= \Px{\interior{N_Z(\vv_i)}} \\
&= \Px{
\{\,\vx\in\R^n \,\mid\, \mA\sign{\mA^T\vx}=\vv_i \,\}
}.
\end{aligned}
\end{equation}
The first equality follows since  $N_{Z}(\vv_i)\setminus \interior{N_{Z}(\vv_i)}$ is a subset of finitely many hyperplanes. 

\subsection{Number of samples for zonotope approximation}
Algorithm \ref{alg:vertenum} uses a standard Gaussian measure for $\mathbb{P}_{\vx}$. In fact, any centrally symmetric distribution should suffice. However, with a Gaussian measure, we can apply work by Damle and Sun~\cite{damle2014random} to derive a lower bound on the number of samples needed to all vertices with sufficiently large \emph{simplicial constants}. A vertex's simplicial constant quantifies how far the vertex is from the convex hull of the remaining vertices. More precisely, the simplicial constant $\alpha_Z$ for a vertex $\vv$ of $Z$ is
\begin{equation}
\label{eq:simpconst}
\alpha_Z(\vv) \;=\; \inf_{\vx} 
\{\, 
\|\vv-\vx\|_2 \,\mid\, 
\vx \in \conv{\vertices{Z}\setminus \{\vv\}}
\,\}.
\end{equation}
Note that $\alpha_Z(\vv)$ is precisely the Hausdorff distance between $Z$ and $\conv{\vertices{Z} \setminus \{\vv\}}$. Thus, if the simplicial constant of $\vv$ is small, then $Z$ is well approximated by $\conv{\vertices{Z} \setminus \{\vv\}}$. Also note that $\alpha_Z(\vv)=\alpha_Z(-\vv)$ by the central symmetry of the zonotope. 

Damle and Sun~\cite{damle2014random} derive an inequality that relates a vertex's simplicial constant to the Gaussian measure of its normal cone. To apply this result, we define the \emph{base} of a vertex $\vv\in\vertices{Z}$ as
\begin{equation}
\base_Z(\vv) \;=\; 
\conv{
\{\, 
\vx \in \vertices{Z}\setminus\{\vv\} \,\mid\, 
\mbox{ $\vx$ is adjacent to $\vv$ in $Z$} 
\,\}
}
\end{equation}
For example, if $Z\subset\R^2$, then $\base_Z(\vv)$ is the edge nearest $\vv$ in $\conv{\vertices{Z}\setminus\{\vv\}}$. 

\begin{theorem} 
\label{thm:bounds}
Given $\varepsilon>0$ and $\delta>0$, let $K$ be the convex hull of a centrally symmetric set of points, and let $\vx_1,\dots,\vx_p$ be independent standard Gaussian vectors in $\mathbb{R}^n$. Define the subset $U_{K}\subseteq\vertices{K}$ dependent on $\delta$ as
\begin{equation}
U_{ K} \;=\; \{\, \vv\in\vertices{K} \,\mid\, \alpha_K(\vv)\geq \delta \,\},
\end{equation}
and define the event $A_K$ dependent on $\{\vx_i\}$ as
\begin{equation}
\label{eq:rv}
\{\vx_i\} \cap (N_K(\vv) \cup N_K(-\vv)) \not= \varnothing \mbox{ for all $\vv\in U_K$}.
\end{equation}
If $p$ is such that
\begin{equation}
p \;>\; \frac{
\log(|\vertices{K}|/\varepsilon)
}{
\log(1/(1-k))
},
\end{equation}
where
\begin{equation}
k \;=\; \left(
\frac{1}{2}\,(1-\sin(\arctan(b/\delta)))
\right)^{\frac{n-1}{2}},
\end{equation}
and
\begin{equation}
b \;\geq\; \underset{\vv\in\vertices{K}}{\max}
\diam{\base_K(\vv)},
\end{equation}
then $\mathbb{P}[A_K] \geq 1 - \varepsilon$.
\end{theorem}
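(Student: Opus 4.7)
The plan is to decompose the failure event $A_K^c$ by vertex, lower bound the Gaussian measure of each double normal cone $N_K(\vv) \cup N_K(-\vv)$ for $\vv \in U_K$, and combine with independence and a union bound. If every such double cone has Gaussian measure at least $k$, then the event that none of the $p$ independent samples lands in a particular double cone has probability at most $(1-k)^p$, and summing over $\vv \in U_K \subseteq \vertices{K}$ gives $\Prob{A_K^c} \leq |\vertices{K}|(1-k)^p$; requiring this to be at most $\varepsilon$ and solving for $p$ reproduces the threshold in the theorem.

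The key step is the per-vertex estimate
\[
\Px{N_K(\vv) \cup N_K(-\vv)} \;\geq\; k, \qquad \vv \in U_K.
\]
Because $K$ is the convex hull of a centrally symmetric point set, $K=-K$, so $-\vv$ is also a vertex and $N_K(-\vv) = -N_K(\vv)$. The standard Gaussian measure on $\R^n$ is invariant under negation, so $\Px{-N_K(\vv)} = \Px{N_K(\vv)}$. Moreover every $\vx \in N_K(\vv) \cap (-N_K(\vv))$ satisfies $\ip{\vx}{\vz-\vv}=0$ for all $\vz \in K$; since $-\vv \in K$ with $-\vv \neq \vv$, this forces $\vx$ into a proper linear subspace of $\R^n$, a set of Gaussian measure zero. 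Hence $\Px{N_K(\vv) \cup N_K(-\vv)} = 2\,\Px{N_K(\vv)}$, and it suffices to show $\Px{N_K(\vv)} \geq k/2$.

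For that single-cone bound I would invoke the geometric argument of Damle and Sun~\cite{damle2014random}. When $\alpha_K(\vv)\geq\delta$ and $\diam{\base_K(\vv)}\leq b$, a supporting hyperplane at $\vv$ may be tilted about $\vv$ through an angle up to $\arctan(b/\delta)$ before it crosses $\conv{\vertices{K}\setminus\{\vv\}}$; consequently $N_K(\vv)$ contains a circular cone of half-angle $\pi/2 - \arctan(b/\delta)$ about the outward normal at $\vv$. Converting the solid angle of this cone to a Gaussian measure via the standard spherical-cap inequality (which transfers from the uniform measure on $S^{n-1}$ to the standard Gaussian by radial symmetry) then yields $\Px{N_K(\vv)} \geq k/2$ with exactly the constant $k = \bigl(\tfrac{1}{2}(1-\sin(\arctan(b/\delta)))\bigr)^{(n-1)/2}$ appearing in the statement.

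With the per-vertex estimate in hand the remainder is routine. Independence of $\vx_1,\dots,\vx_p$ gives $\Prob{\{\vx_i\} \cap (N_K(\vv)\cup N_K(-\vv)) = \varnothing} \leq (1-k)^p$, and a union bound over $U_K \subseteq \vertices{K}$ produces $\Prob{A_K^c} \leq |\vertices{K}|(1-k)^p$; the stated lower bound on $p$ is precisely what forces this to be at most $\varepsilon$. The \emph{main obstacle} is the geometric/measure-theoretic step in the third paragraph: identifying the inscribed circular sub-cone of $N_K(\vv)$ with the prescribed half-angle, and calibrating the spherical-cap bound so that the resulting constant matches $k$ rather than some weaker expression. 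This is the technical content borrowed from Damle and Sun, and any looseness there propagates directly into the final sample complexity.
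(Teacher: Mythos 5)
Your proof follows essentially the same route as the paper: a union bound over the vertices in $U_K$, reduced to a per-vertex lower bound on the Gaussian measure of the (double) normal cone, with that single-cone estimate resting entirely on Lemma~2.5 of Damle and Sun --- exactly the lemma the paper cites. The only differences are cosmetic: the paper quotes the lemma as an upper bound on $\alpha_K(\vv)$ in terms of $\omega_{\vv}=\Px{N_K(\vv)}$ and then inverts it algebraically, whereas you paraphrase the underlying inscribed-cone/spherical-cap picture directly, and you spell out the symmetry argument $\Px{N_K(\vv)\cup N_K(-\vv)}=2\Px{N_K(\vv)}$ that the paper leaves implicit in its $(1-2k)^p$ step. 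One small internal inconsistency worth flagging: tilting a supporting hyperplane through an angle $\theta$ produces a cone of normals of half-angle $\theta$, so your claim that the tilt is ``up to $\arctan(b/\delta)$'' does not lead to the half-angle $\pi/2-\arctan(b/\delta)$ asserted in the next clause; the latter is the one consistent with the formula for $k$ (and with the correct limiting behavior as $\delta\to\infty$), and since you explicitly defer the rigorous version to Damle and Sun this slip does not affect the soundness of the overall argument.
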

The proof of Theorem \ref{thm:bounds} is in Appendix \ref{app:B}. The random variable $T$ equals 1 when the Gaussian samples $\{\vx_i\}$ fall in the normal cones of all vertices (or their negatives) with simplicial constant greater than $\delta$. Theorem \ref{thm:bounds} may be applied with $K = Z$; then by Theorem \ref{thm:normalCone}, $T=1$ means Algorithm \ref{alg:vertenum} returns all vertices with simplicial constant greater than $\delta$.

In the application to active subspaces (see \eqref{eq:ridge}), the rows of $\mA$ are orthogonal. In this case, we obtain a bound that is independent of any unknown property of the zonotope, which we state as a corollary without proof.
\begin{corollary}
Suppose the rows of $\mA$ are orthogonal. Then 
\begin{equation}
\underset{\vv\in\vertices{Z}}{\max} \diam{\base_Z(\vv)}
\;\leq\; 
\underset{\vx\in[-1,1]^m}{\max} 2\|\mA\,\vx\|_2
\;\leq\; 
2\sqrt{m}
\end{equation}
and Theorem \ref{thm:bounds} holds with $K=Z(\mA)$ and $b = 2\sqrt{m}$.
\end{corollary}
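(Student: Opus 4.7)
The plan is to establish the two inequalities in sequence and then invoke Theorem \ref{thm:bounds} with $K = Z(\mA)$ and $b = 2\sqrt{m}$.

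For the first inequality, $\max_{\vv\in\vertices{Z}} \diam{\base_Z(\vv)} \leq 2\max_{\vx\in[-1,1]^m}\|\mA\vx\|_2$, I would use that the diameter of a finite-dimensional convex polytope is attained at a pair of its vertices. Since $\base_Z(\vv)$ is the convex hull of a subset of $\vertices{Z}$, its diameter equals $\|\vv_1-\vv_2\|_2$ for some $\vv_1,\vv_2\in\vertices{Z}$ that are adjacent to $\vv$. Theorem \ref{thm:zono} expresses each such vertex as $\mA\vs$ for some $\vs\in\{-1,1\}^m$, so $\vv_1-\vv_2=\mA(\vs_1-\vs_2)$ with $\frac{1}{2}(\vs_1-\vs_2)\in[-1,1]^m$. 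Taking Euclidean norms yields the bound.

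For the second inequality, orthogonality of the rows of $\mA$ (and orthonormality, as inherited from the eigenvector construction of Section \ref{sec:intro}) gives $\mA\mA^T = \mI_n$, so the largest singular value of $\mA$ equals $1$ and $\|\mA\vx\|_2 \leq \|\vx\|_2$. Since any $\vx\in[-1,1]^m$ satisfies $\|\vx\|_2 \leq \sqrt{m}$, we conclude $2\|\mA\vx\|_2 \leq 2\sqrt{m}$. Combining the two bounds, $b = 2\sqrt{m}$ meets the hypothesis $b \geq \max_{\vv} \diam{\base_Z(\vv)}$ required by Theorem \ref{thm:bounds}, and the corollary follows by direct substitution.

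There is no substantive obstacle here: each step is elementary once the explicit vertex characterization from Theorem \ref{thm:zono} is paired with the operator-norm estimate afforded by orthonormal rows. The only subtle point to state carefully is that the hypothesis ``orthogonal rows'' is to be read as orthonormal, consistent with its use in the active-subspace setting where the rows of $\mA$ are unit eigenvectors; otherwise the constant $\sqrt{m}$ must absorb the scale of the row norms.
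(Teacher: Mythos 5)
The paper states this corollary ``without proof,'' so there is no in-paper argument to compare against; your proof fills the gap and is correct. The subtlety you flag is real and important: ``orthogonal rows'' must be read as \emph{orthonormal} rows, since with merely orthogonal (unnormalized) rows $\mA\mA^T$ is diagonal but not the identity and $\|\mA\vx\|_2\le\|\vx\|_2$ fails; the active-subspace setting, where rows are unit eigenvectors, justifies this reading. Two small remarks. First, the left inequality admits a slightly shorter route that bypasses the $\{-1,1\}^m$ vertex characterization: $\base_Z(\vv)\subset Z$ gives $\diam{\base_Z(\vv)}\le\diam{Z}$, and central symmetry of $Z$ (Corollary \ref{cor:vert-}) gives $\diam{Z}=2\max_{\vy\in Z}\|\vy\|_2=2\max_{\vx\in[-1,1]^m}\|\mA\vx\|_2$; this also makes transparent why Theorem \ref{thm:hdist} later takes $b\geq\diam{Z}$. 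Your route via vertex differences $\mA(\vs_1-\vs_2)$ with $\frac{1}{2}(\vs_1-\vs_2)\in\{-1,0,1\}^m$ is equally valid and somewhat closer to the zonotope machinery already built. Second, when invoking Theorem \ref{thm:bounds} with $K=Z(\mA)$ it is worth stating explicitly that $Z$ is the convex hull of a centrally symmetric vertex set, which is exactly Corollary \ref{cor:vert-}; this verifies the standing hypothesis on $K$ in that theorem.
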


We conclude this section with a bound on the Hausdorff distance between $Z$ and the convex hull of the set $V$ produced by Algorithm \ref{alg:vertenum}. We denote the Hausdorff distance by $h(Z,\conv{V})$. 

\begin{theorem}
\label{thm:hdist}
Let $Z=Z(\mA)$ be a zonotope with generator $\mA\in\mathbb{R}^{n\times m}$ satisfying Assumption \ref{ass:big}. Given $\varepsilon>0$ and $\delta>0$, choose $p$ as in Theorem \ref{thm:bounds} for $b\geq\operatorname{diam}(Z)$, and let $V$ be the subset of $Z$'s vertices produced by Algorithm \ref{alg:vertenum} after $p$ iterations. Then
\begin{equation}
h(Z,\conv{V}) \;\leq\; \frac{|\vertices{Z}\setminus V|}{2}\, \delta
\end{equation} 
with probability at least $1-2^a\,\epsilon$, where $a=|\vertices{Z}\setminus U_{Z}|/2$ and 
\begin{equation}
U_{Z} \;=\; \{\,\vv \in \vertices{Z} \,\mid\, \alpha_Z(\vv) \geq \delta \,\}.
\end{equation}
\end{theorem}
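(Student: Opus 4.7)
The plan is to combine a recursive peeling argument with a union bound over a specific family of centrally symmetric sub-polytopes.

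First I would form the collection $\mathcal{K}$ of centrally symmetric $V'\subseteq\vertices{Z}$ with $V'\supseteq U_Z$. Choosing $V'$ amounts to selecting which of the $a$ antipodal pairs in $\vertices{Z}\setminus U_Z$ to retain, so $|\mathcal{K}|=2^a$. For each $V'\in\mathcal{K}$ set $K_{V'}=\conv{V'}$ and apply Theorem \ref{thm:bounds} to $K_{V'}$ with the given $\delta,\varepsilon$ and with $b\geq\diam{Z}\geq\diam{K_{V'}}$; each point of $V'$ is extreme in $Z$ and thus in $K_{V'}$, so $\vertices{K_{V'}}=V'$ and $|V'|\leq|\vertices{Z}|$, making the prescribed $p$ sufficient uniformly. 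A union bound yields, with probability at least $1-2^a\varepsilon$, that the event $A_{K_{V'}}$ holds for every $V'\in\mathcal{K}$ simultaneously; I would also condition on the probability-one event that no sample $\vx_j$ lies on the boundary of any vertex normal cone of any $K_{V'}$. Taking $V'=\vertices{Z}$ together with Theorem \ref{thm:normalCone} then forces $U_Z\subseteq V$.

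Next I would peel antipodal pairs from $\vertices{Z}$ down to $V$. Writing $T=\vertices{Z}\setminus V$ as $\{\vv_i,-\vv_i\}_{i=1}^t$ with $t=|T|/2$, set $V_0=\vertices{Z}$ and $V_i=V_{i-1}\setminus\{\vv_i,-\vv_i\}$, so each $V_i$ is centrally symmetric, $V_i\supseteq U_Z$, $V_i\in\mathcal{K}$, and $V_t=V$. The key reduction is that $\alpha_{K_{V_{i-1}}}(\vv_i)<\delta$. Indeed, if this failed, then $A_{K_{V_{i-1}}}$ would yield some $\vx_j\in\interior{N_{K_{V_{i-1}}}(\vv_i)}$ (after the probability-zero boundary event is excluded), while $\vw:=\vm(\vx_j)\in V$ satisfies $\vx_j\in N_Z(\vw)\subseteq N_{K_{V_{i-1}}}(\vw)$; disjointness of interiors of distinct vertex normal cones forces $\vw=\vv_i$, contradicting $\vv_i\notin V$.

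The heart of the argument is to bound $h(K_{V_{i-1}},K_{V_i})<\delta$ via central symmetry. Pick $\vy\in\conv{V_{i-1}\setminus\{\vv_i\}}$ with $\|\vv_i-\vy\|<\delta$ and decompose $\vy=a(-\vv_i)+(1-a)\vw$ with $a\in[0,1]$ and $\vw\in\conv{V_i}$ (reducing to $\vu=0$ when $a=1$). Central symmetry of $V_i$ places $0\in\conv{V_i}$, so the segment $\{\lambda\vw:\lambda\in[0,1]\}$ lies in $\conv{V_i}$, and in particular $\vu:=\tfrac{1-a}{1+a}\vw\in\conv{V_i}$. Then
\begin{equation*}
\vv_i-\vu \;=\; \frac{1}{1+a}\bigl((1+a)\vv_i-(1-a)\vw\bigr) \;=\; \frac{1}{1+a}(\vv_i-\vy),
\end{equation*}
so $\|\vv_i-\vu\|<\delta$; the antipode case follows by central symmetry of $\conv{V_i}$. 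Since the Hausdorff distance between two nested polytopes is attained at vertices of the larger one absent from the smaller, $h(K_{V_{i-1}},K_{V_i})<\delta$. Telescoping via the triangle inequality for $h$ along $K_{V_0}\supseteq K_{V_1}\supseteq\cdots\supseteq K_{V_t}$ gives $h(Z,\conv{V})<t\delta=\tfrac{|T|}{2}\delta$.

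The principal obstacle I anticipate is the $\tfrac{1-a}{1+a}$-rescaling step: peeling one vertex at a time loses the crucial factor of one-half, while a naive attempt to bound $\mathrm{dist}(\vv_i,\conv{V_i})$ by $\alpha_{K_{V_{i-1}}}(\vv_i)$ fails because removing the antipode $-\vv_i$ from the reference set strictly enlarges the minimum distance. Exploiting central symmetry to rescale $\vw$ into $\conv{V_i}$ is what recovers exactly one $\delta$ per pair, matching the factor $|T|/2$ in the theorem; the hypothesis $b\geq\diam{Z}$ is precisely what allows Theorem \ref{thm:bounds} to be invoked uniformly over all $2^a$ sub-polytopes in the union bound.
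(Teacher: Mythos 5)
Your proof follows the same overall architecture as the paper's: take a union bound over the $2^a$ centrally symmetric sub-polytopes of $\vertices{Z}$ containing $U_Z$, peel antipodal pairs from $\vertices{Z}$ down to $V$ along the nested chain $C_0=Z\supset C_1\supset\cdots\supset C_{n_V/2}=\conv{V}$, argue that the peeled vertex $\vv_i$ has $\alpha_{C_{i-1}}(\vv_i)<\delta$ on the favorable event, and telescope via the triangle inequality for Hausdorff distance. Your probabilistic step (a straight union bound) is equivalent to the paper's, which phrases the same fact through conditional probabilities and the law of total probability; your argument that $\alpha_{C_{i-1}}(\vv_i)<\delta$ (disjointness of normal-cone interiors, plus $N_Z(\vw)\subseteq N_{C_{i-1}}(\vw)$) is equivalent to the paper's normal-cone containment $N_{C_i}(\vv)\subseteq N_{C_{i-1}}(\vv)\cup N_{C_{i-1}}(\vv_i)\cup N_{C_{i-1}}(-\vv_i)$.

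Where you genuinely add something is the step from $\alpha_{C_{i-1}}(\vv_i)<\delta$ to $h(C_{i-1},C_i)<\delta$. The paper asserts this follows "by the definition" of the simplicial constant, but that is not immediate: $\alpha_{C_{i-1}}(\vv_i)$ measures the distance from $\vv_i$ to $\conv{\vertices{C_{i-1}}\setminus\{\vv_i\}}$, which still contains $-\vv_i$, whereas $C_i$ removes the antipode as well, so a priori $\operatorname{dist}(\vv_i,C_i)$ could exceed $\alpha_{C_{i-1}}(\vv_i)$. Your lemma closes this by decomposing the nearby point $\vy=a(-\vv_i)+(1-a)\vw$ with $\vw\in C_i$ and rescaling $\vw$ by $(1-a)/(1+a)$ toward $0\in C_i$ (using central symmetry); the identity $\vv_i-\vu=\tfrac{1}{1+a}(\vv_i-\vy)$ then gives $\operatorname{dist}(\vv_i,C_i)<\delta$ without losing a factor of two, exactly matching the $|\vertices{Z}\setminus V|/2$ coefficient in the statement. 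Your explicit conditioning on the almost-sure event that no sample lands on a normal-cone boundary is another careful addition, since Theorem~\ref{thm:normalCone} identifies $\vm^{-1}(\vv)$ with the interior of the cone only. In short: same route, but you supply a missing lemma and thereby make the constant-tracking rigorous.
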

The proof of Theorem \ref{thm:hdist} is in Appendix \ref{app:C}. The result quantifies the approximation of $\conv{V}$ to $Z$.

\section{Numerical experiments}
\label{sec:exp}

The following experiments illustrate and complement the theory developed in previous sections. The Python scripts for running the experiments and generating the figures (excluding Figure \ref{fig:res0}, which was generated in Matlab) are available at \url{https://bitbucket.org/paulcon/a-randomized-algorithm-for-enumerating-zonotope-vertices}.

Figure \ref{fig:res0} shows a zonotope with $m=5$, $n=2$, where the generating matrix $\mA\in\R^{2\times 5}$ is randomly generated with orthogonal rows. The black circles in Figure \ref{fig:z0} are the projections of all $2^5$ corners of the $5$-dimensional hypercube. The red x's identify the subset of projected vertices that define the zonotope---computed with Quickhull---which is outlined in black and shaded in gray. Figure \ref{fig:z1} visualizes the map $\vm(\vx)$ from \eqref{eq:vert}. The zonotope is outlined in black, and the vertices are black circles with particular face colors. The colors in the plane indicate to which vertex the corresponding points in $\R^2$ map to. This illustrates the probability measure on the vertices and its relationship to the simplicial constant from \eqref{eq:simpconst}. For example, the relatively large yellow regions map to the yellow vertices, which have a large simplicial constant---as indicated by the zonotope's sharpness around the vertex. In contrast, the relatively small dark blue regions map to the dark blue vertices with very small simplicial constants. The convex hull of all vertices excluding the dark blue vertices is a good approximation of the zonotope. This illustrates the theory behind Theorem \ref{thm:bounds}. 

\begin{figure}[!ht]
\centerline{
\subfloat[]{
\includegraphics[width=0.45\textwidth]{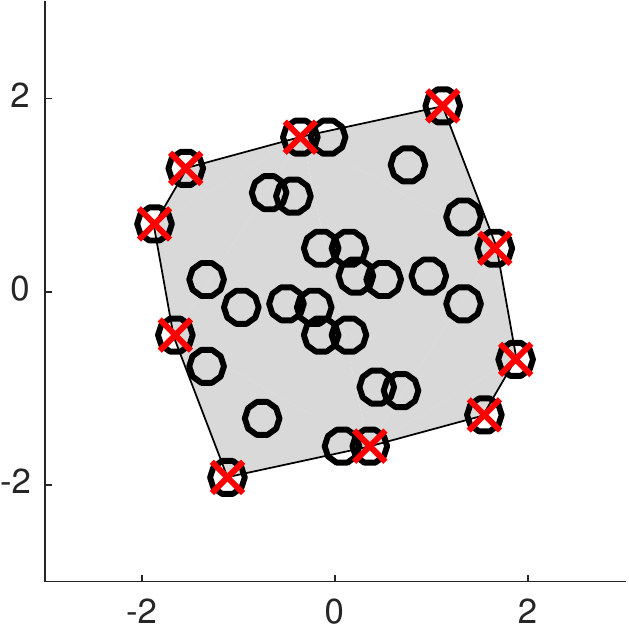}%
\label{fig:z0}
}
\hfil
\subfloat[]{
\includegraphics[width=0.45\textwidth]{figs/zmap.eps}%
\label{fig:z1}
}
}
\caption{Visualizing a zonotope with $m=5$ and $n=2$. Figure \ref{fig:z0} shows all $2^5$ vertices of the $5$-dimensional hypercube projected with the generator matrix. The red x's identify the subset of projected vertices that define the zonotope. Figure \ref{fig:z1} illustrates the probability map on the zonotope vertices and confirms the theory behind Theorem \ref{thm:bounds}.}
\label{fig:res0}
\end{figure}

Figure \ref{fig:errors0} shows the results of an approximation experiment. For $m=10$ and $n=2,\dots,5$, we generate a random orthogonal generating matrix. We then run 10 independent trials of Algorithm \ref{alg:vertenum} and check the approximation error after a fixed number $p$ of random samples, which we compute as
\begin{equation}
\text{error}_p \;=\; h(Z,\,\conv{V_p}),
\end{equation}
where $V_p$ is the resulting vertex set after $p$ samples. The subfigures in Figure \ref{fig:errors0} show the decrease in error for each trial as a function of $p$ for the different values of $n$; a missing value indicates that the error is zero. The convergence rate degrades slightly as $n$ increases. Figure \ref{fig:errors1} shows results of an identical study with $m=20$. We observe similarly degrading convergence rate as $n$ increases. Surprisingly, the results are similar between $m=10$ and $m=20$. However, we note that for $m=20$ and $n=4$, Algorithm \ref{alg:vertenum} found only 2310 of the 2320 zonotope vertices after $10^{10}$ independent samples, which we used as the \emph{true} $Z$ for the experiment. Similarly, for $m=20$ and $n=5$, Algorithm \ref{alg:vertenum} found 9760 of the 10072 vertices after $10^{10}$ independent samples. We repeated these studies with different realizations of the generating matrix with comparable results. 

\begin{figure}[!ht]
\centering
\subfloat[$n=2$]{
\includegraphics[width=0.45\textwidth]{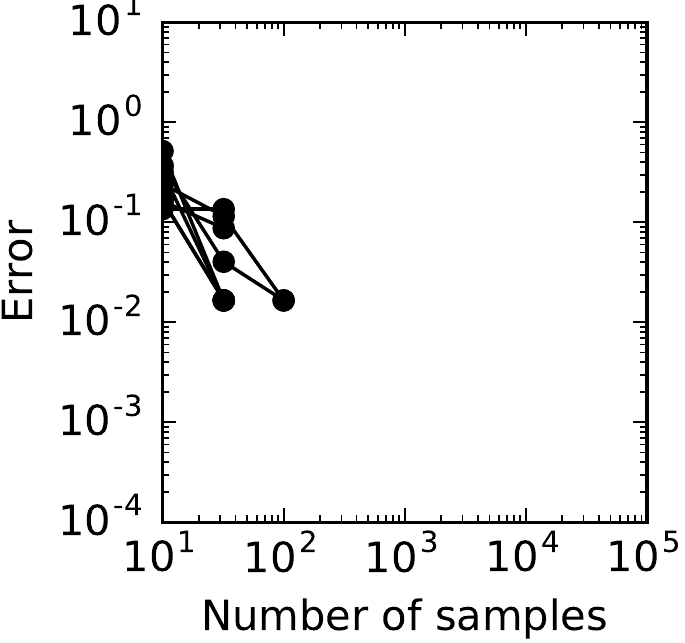}%
}
\hfil
\subfloat[$n=3$]{
\includegraphics[width=0.45\textwidth]{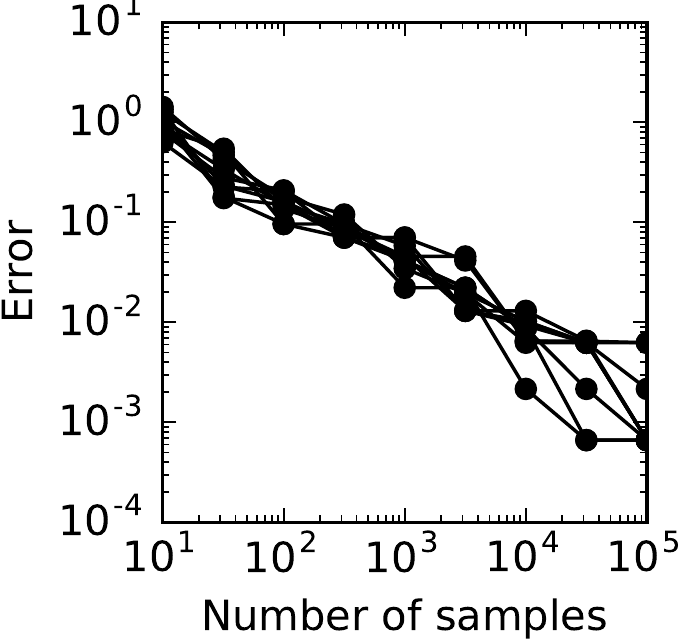}%
}\\
\subfloat[$n=4$]{
\includegraphics[width=0.45\textwidth]{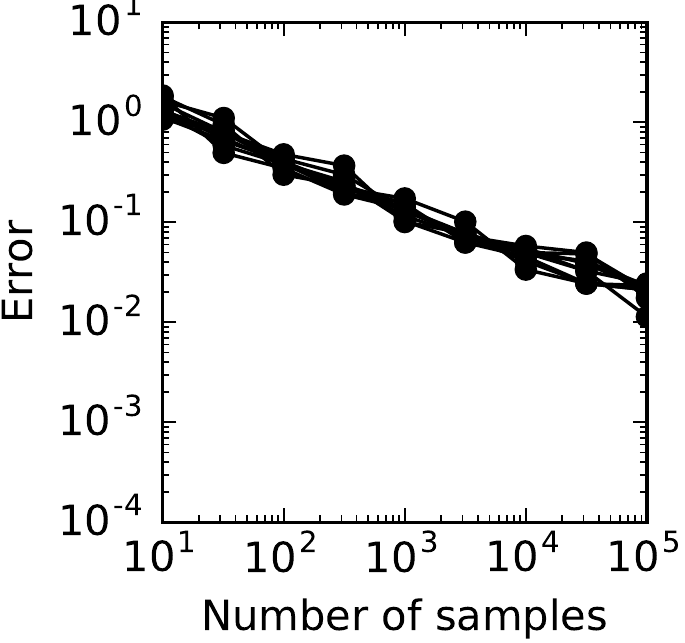}%
}
\hfil
\subfloat[$n=5$]{
\includegraphics[width=0.45\textwidth]{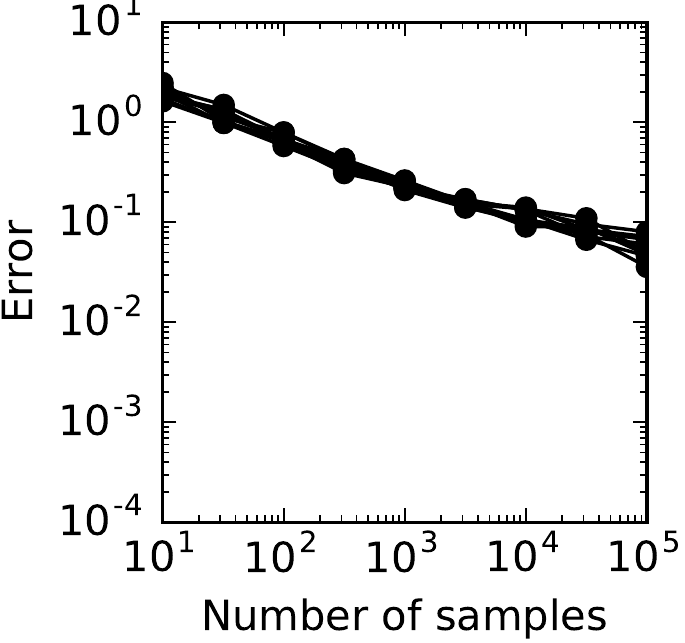}%
}
\caption{Errors in the approximate zonotope from Algorithm \ref{alg:vertenum} after $p$ samples for $m=10$ and $n$ increasing from 2 to 5. The convergence rate degrades as $n$ increases.}
\label{fig:errors0}
\end{figure}

\begin{figure}[!ht]
\centering
\subfloat[$n=2$]{
\includegraphics[width=0.45\textwidth]{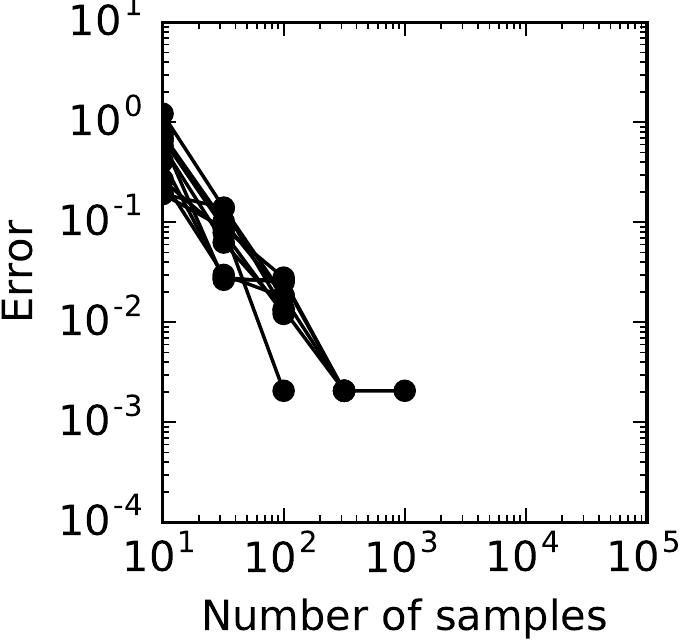}%
}
\hfil
\subfloat[$n=3$]{
\includegraphics[width=0.45\textwidth]{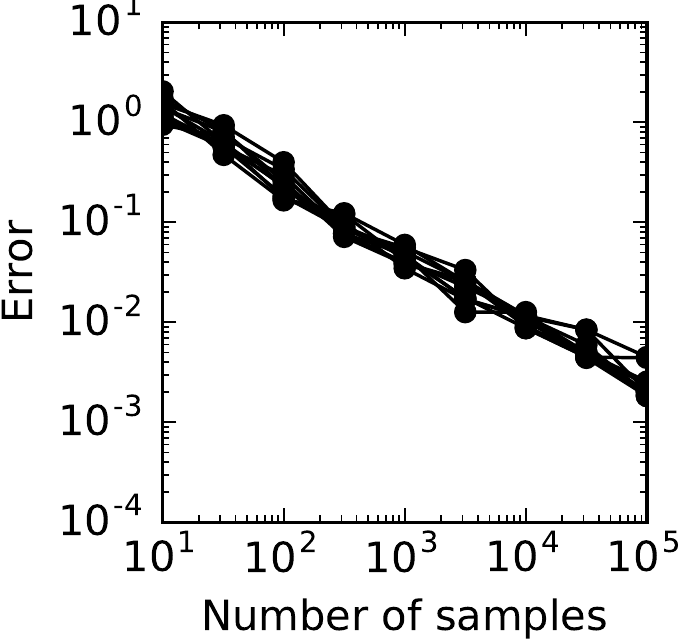}%
}\\
\subfloat[$n=4$]{
\includegraphics[width=0.45\textwidth]{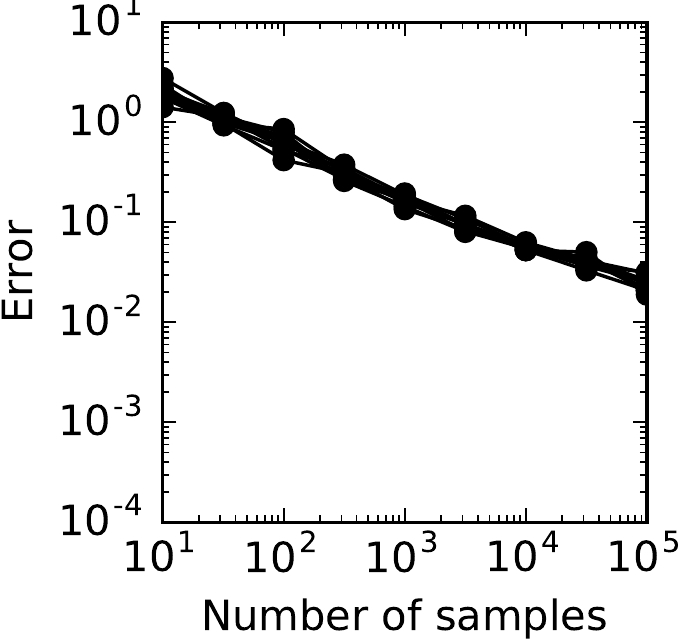}%
}
\hfil
\subfloat[$n=5$]{
\includegraphics[width=0.45\textwidth]{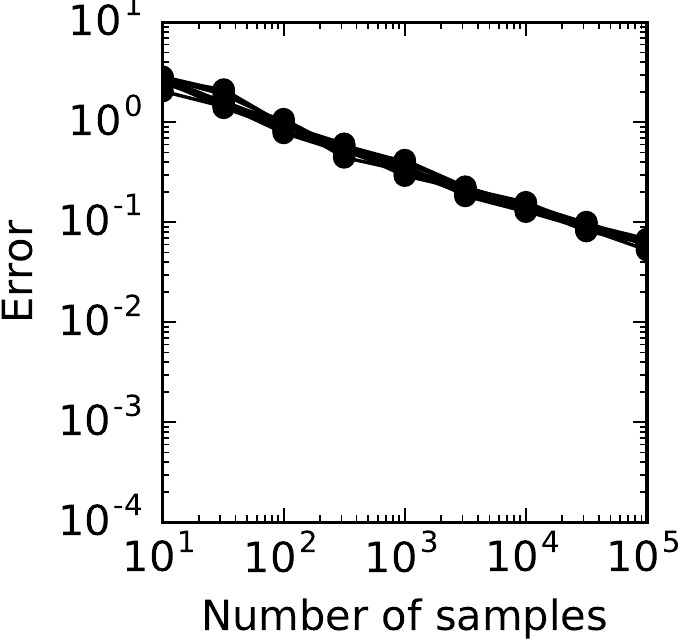}%
}
\caption{Errors in the approximate zonotope from Algorithm \ref{alg:vertenum} after $p$ samples for $m=20$ and $n$ increasing from 2 to 5. Similar to the $m=10$ case in Figure \ref{fig:errors0}, the convergence rate degrades as $n$ increases. However, the convergence rate for $m=20$ appears similar to the convergence rate for $m=10$.}
\label{fig:errors1}
\end{figure}

Figure \ref{fig:stops} shows histograms of the number of samples needed in Algorithm \ref{alg:vertenum} for complete vertex enumeration for $10^4$ independent trials using the same generator matrices. Note the general increase in the number of samples needed for complete enumeration as $m$ and $n$ increase. This is most likely a result of the random method for generating the generator matrices. As $m$ and $n$ increase, the number of vertices defining the zonotope increases, and the chances are good of getting a vertex with a very small simplicial constant. This factor controls the randomized method's ability to enumerate all the zonotope's vertices. 

\begin{figure}[!ht]
\centering
\subfloat[$m=10$, $n=2$]{
\includegraphics[width=0.45\textwidth]{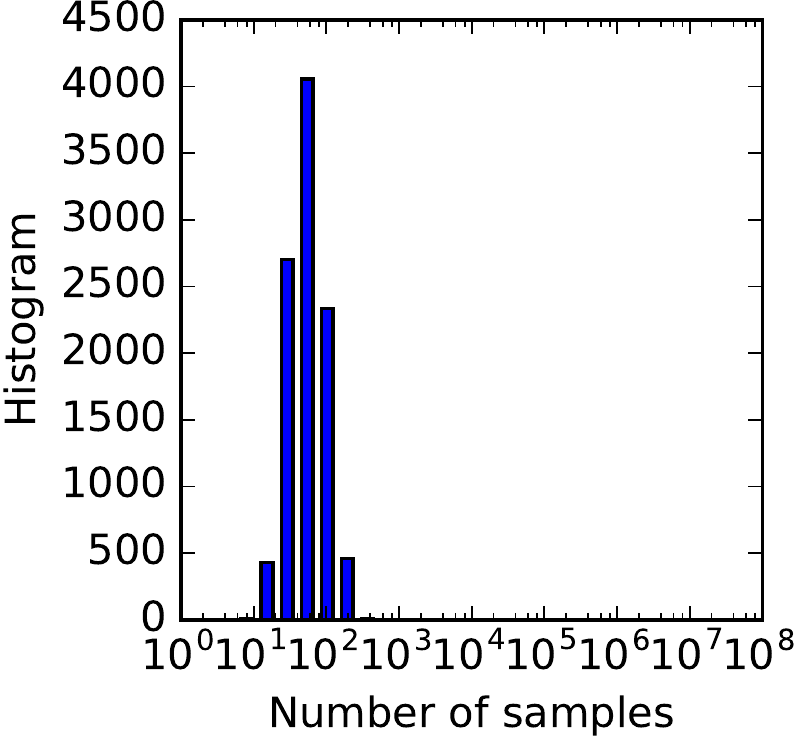}%
}
\hfil
\subfloat[$m=10$, $n=3$]{
\includegraphics[width=0.45\textwidth]{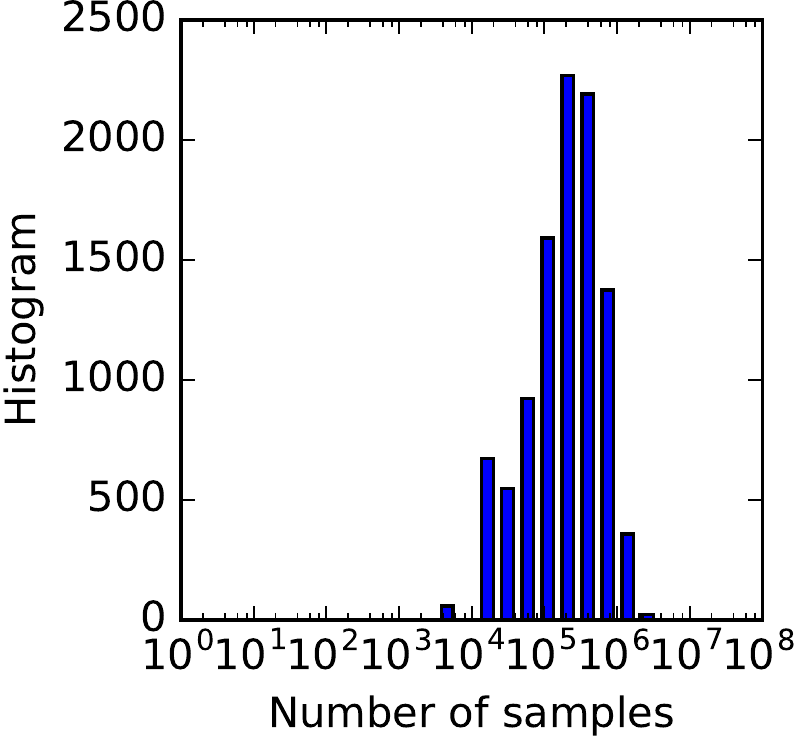}%
}\\
\subfloat[$m=20$, $n=2$]{
\includegraphics[width=0.45\textwidth]{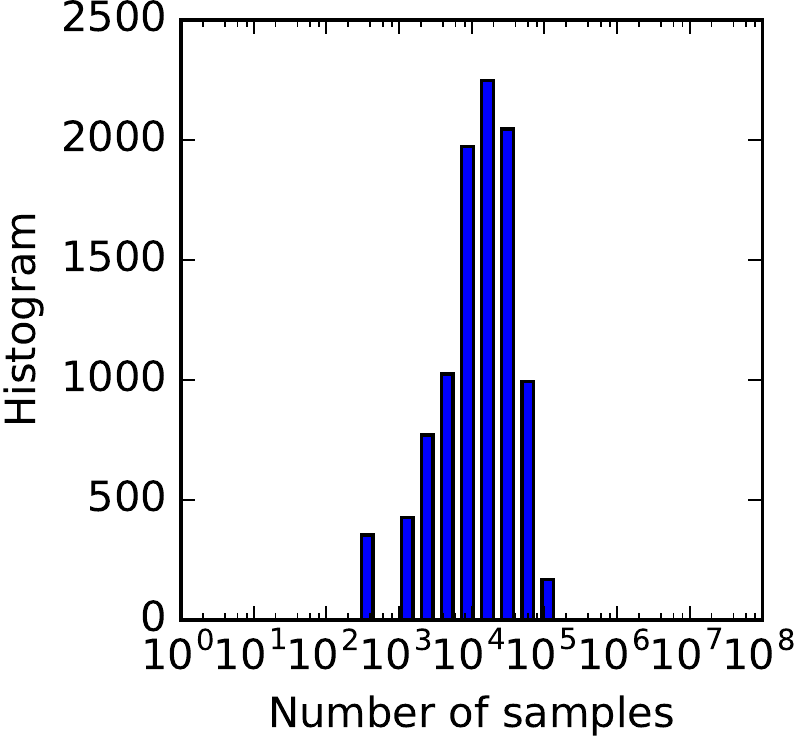}%
}
\hfil
\subfloat[$m=20$, $n=3$]{
\includegraphics[width=0.45\textwidth]{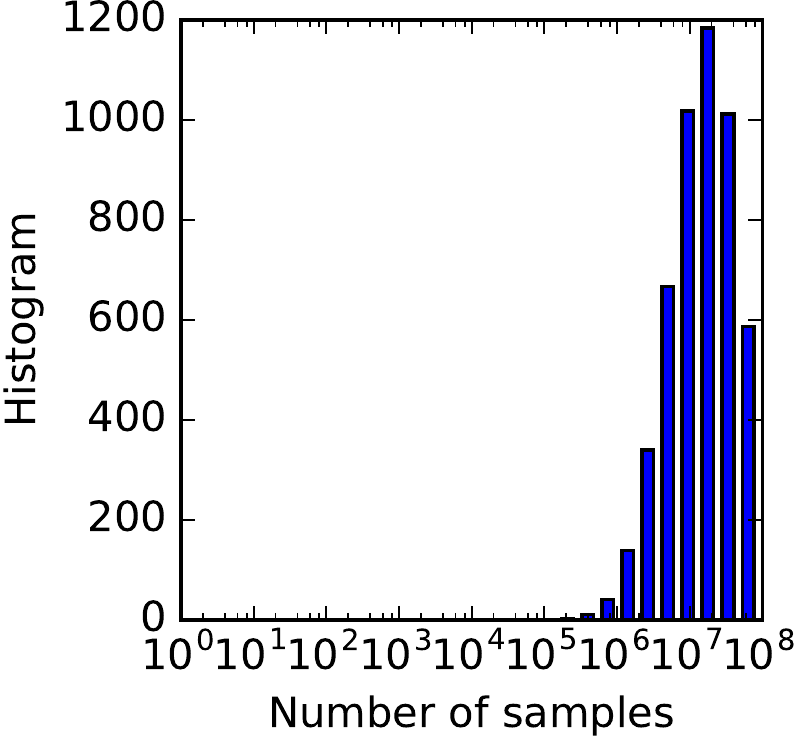}%
}
\caption{Histograms of number of samples needed for complete zonotope vertex enumeration for varying values of $m$ and $n$ over $10^4$ independent trials of Algorithm \ref{alg:vertenum}.}
\label{fig:stops}
\end{figure}

Finally, we compare wall clock times between a Python/numpy implementation of Algorithm \ref{alg:vertenum} with complete enumeration (i.e., terminate when $|V|=k$) to the Python-wrapped \texttt{libzonotope} implementation of the reverse search algorithm~\cite{libzonotope} for various values of $m$ and $n$ using randomly generated generator matrices. Table \ref{tab:times} shows the averages and standard deviations of the timings in seconds over 1000 trials. The times for the randomized algorithm are labeled \emph{random}, and the times for the reverse search algorithm are labeled \emph{reverse}. For $n=2$, the randomized algorithm appears to be faster, with the advantage increasing as $m$ increases. However, for $n=3$ the reverse search implementation is much faster---especially for larger $m$. It's worth noting the large standard deviation for the randomized algorithm when $n=3$ and $m$ is large.

\begin{table}
\caption{Timings in seconds for a Python/numpy implementation of Algorithm \ref{alg:vertenum} (random) and the Python-wrapped \texttt{libzonotope} implementation of a reverse search enumeration (reverse). The faster average time is bolded.}
\label{tab:times}
\begin{tabularx}{\linewidth}{llXXXXXXXX}
	\toprule
& & $n = 2$ & & & & $n = 3$ \\
  \cmidrule(r){3-6} \cmidrule(r){7-10}
  & $m =$ & 5 & 10 & 15 & 20 & 5 & 10 & 15 & 20 \\
	\midrule
avg. & random &	0.067	&	\textbf{0.095}	&	\textbf{0.13}	&	\textbf{0.145}	&	\textbf{0.072}	&	1.524	&	38.64	&	 39.38 \\
 & reverse &	\textbf{0.046}	&	0.211	&	0.388	&	0.589	&	{0.201}	&	\textbf{0.95}	&	\textbf{1.814}	&	 \textbf{3.700} \\ \addlinespace
std. & random &	0.021	&	0.024	&	0.043	&	0.035	&	0.025	&	1.598	&	28.47	&	 33.81 \\
&  reverse & 	0.016	&	0.063	&	0.152	&	0.287	&	0.056	&	0.367	&	0.376	&	 0.384\\ 
	\bottomrule
\end{tabularx}
\end{table}

\section{Conclusion}
\label{sec:conclusion}

We present a randomized algorithm for enumerating the vertices of a zonotope, which is a low-dimensional linear projection of a hypercube. The algorithm relies on a characterization of a zonotope vertex as a linear combination of the zonotope generators, where the weights of the combination are the signs of the matrix-vector product between the zonotope's generator matrix and a vector in $\R^n$. We study the probability of recovering particular vertices and relate it to the vertices' normal cones. This study shows that if we terminate the randomized algorithm before all vertices are recovered, then the convex hull of the resulting vertex set approximates the zonotope. 

The numerical examples suggest that the time needed to recover all vertices increases rapidly with the dimension $n$---since incrementing $n$ exponentially increases the size of the search space for the randomized algorithm. Therefore, for large $n$, we expect the enumeration algorithm to be most appropriate as an approximation algorithm---particularly for cases when existing methods (e.g., a reverse search-based method) are not practical. The algorithm may also be useful for generating a set of starting vertices for independent, parallel reverse searches. 

We hypothesize that one can develop a \emph{condition number} for the vertex enumeration problem based on how close two generating line segments are to parallel. If two line segments are close to parallel, then there will be a pair of vertices whose normal cones have small probability. One may be able to measure this condition number from the zonotope's generators, thus gaining insight into how well the randomized algorithm will perform before applying it. We leave this question to future work. 

\appendix

\section{Preliminary definitions}
\label{app:pre}
For a polytope $P \subset \R^n$ and $\vc\in \R^n$, $S(P,\vc)$ denotes the set of maximizers for the linear function $\ip{\vx}{\vc}$, i.e., 
\begin{equation}
S(P,\vc) \;=\; \{\,\vp\in P \,\mid\, \ip{\vp}{\vc} \geq \ip{\vq}{\vc}  \mbox{ for all $\vq\in P$}\,\}.
\end{equation}
Given polytopes $P_1,\dots,P_k$, $P_1+\cdots+P_k$ denotes their Minkowski sum. For $P = P_1+\cdots+P_k$, we call $P_1+\cdots+P_k$ the \emph{Minkowski decomposition} of $P$. For example, \eqref{eq:zonomink} is the Minkowski decomposition of the zonotope $Z(\mA)$.

\section{Proofs of Theorem \ref{thm:zono}, Corollary \ref{cor:vert-}, and Corollary \ref{cor:mapping}}
\label{app:0}

The key to establishing Theorem \ref{thm:zono} is a necessary and sufficient characterization of an extreme point of a polytope given the polytope's Minkowski decomposition from Fukuda~\cite{Fukuda2004}. We restate Fukuda's result for completeness.

\begin{theorem}[Corollary 2.2~\cite{Fukuda2004}]
\label{thm:fukuda}
Let $P_1,\dots,P_k$ be polytopes in $\R^n$ and let $P = P_1+\cdots+P_k$. A vector $\vv\in P$ is an extreme point of $P$ if and only if 
\begin{enumerate}
\item[(i)] $\vv=\vv_1+\cdots+\vv_k$, where $\vv_i$ is an extreme point of $P_i$, and 
\item[(ii)] there exists $\vc \in \R^n$ with $\{\vv_i\} = S(P_i,\vc)$ for all $i$.
\end{enumerate}
\end{theorem}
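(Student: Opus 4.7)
The plan is to prove the biconditional via two short arguments that both hinge on the classical Minkowski-sum identity
\begin{equation}
S(P_1+\cdots+P_k,\vc) \;=\; S(P_1,\vc)+\cdots+S(P_k,\vc),
\end{equation}
so I would first dispatch this identity as a lemma. The $\supseteq$ inclusion is immediate by summing the inequalities that define the $S(P_i,\vc)$. For the $\subseteq$ inclusion, I would use an exchange argument: if $\vp=\vp_1+\cdots+\vp_k\in S(P,\vc)$ with $\vp_i\in P_i$, and some $\vp_j\notin S(P_j,\vc)$, then replacing $\vp_j$ by an element of $S(P_j,\vc)$ strictly increases $\ip{\cdot}{\vc}$, contradicting maximality of $\vp$.

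For the ``if'' direction, suppose (i) and (ii) hold and $\vv=\tfrac{1}{2}(\vu+\vw)$ with $\vu,\vw\in P$. Decompose $\vu=\sum_i \vu_i$ and $\vw=\sum_i \vw_i$ with $\vu_i,\vw_i\in P_i$. Because each $\vv_i$ is the \emph{unique} maximizer of $\ip{\cdot}{\vc}$ on $P_i$, we have $\ip{\vv_i}{\vc}\geq \ip{\vu_i}{\vc}$ and $\ip{\vv_i}{\vc}\geq \ip{\vw_i}{\vc}$ for every $i$. Summing these and combining with $\ip{\vv}{\vc}=\tfrac{1}{2}(\ip{\vu}{\vc}+\ip{\vw}{\vc})$ forces equality throughout, and the uniqueness of each maximizer $\vv_i$ then yields $\vu_i=\vw_i=\vv_i$ for all $i$. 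Hence $\vu=\vw=\vv$, showing that $\vv$ is extreme in $P$.

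For the ``only if'' direction, assume $\vv$ is extreme in $P$. Because $P$ is a polytope and $\vv$ is a vertex, there exists $\vc\in\R^n$ such that $S(P,\vc)=\{\vv\}$; this follows from the standard fact that every vertex of a polytope is cut out by a supporting hyperplane that meets $P$ only at that vertex. Applying the Minkowski-sum identity gives $\{\vv\}=S(P_1,\vc)+\cdots+S(P_k,\vc)$. A Minkowski sum of nonempty polytopes is a singleton if and only if each summand is a singleton, so there exist unique $\vv_i\in P_i$ with $S(P_i,\vc)=\{\vv_i\}$, and necessarily $\vv=\vv_1+\cdots+\vv_k$. Being the unique maximizer of $\ip{\cdot}{\vc}$ on $P_i$, each $\vv_i$ is an extreme point of $P_i$, delivering (i) and (ii) simultaneously with a common $\vc$.

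The main obstacle is the Minkowski-sum-maximization identity, but as noted above the verification is essentially a one-line exchange argument. The remaining work is bookkeeping: in the ``if'' direction the uniqueness built into (ii) propagates through the midpoint decomposition, and in the ``only if'' direction the singleton maximizer on $P$ is shown to force singleton maximizers on each $P_i$ via the Minkowski-sum identity.
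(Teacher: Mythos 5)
The paper does not prove this result; it cites it verbatim as Corollary~2.2 of Fukuda~\cite{Fukuda2004} and restates it ``for completeness'' in the appendix without argument, so there is no paper proof to compare against. Your proposal is a correct, self-contained proof, and it proceeds along the standard route one would expect for this fact.

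Your key lemma, the identity $S(P_1+\cdots+P_k,\vc)=S(P_1,\vc)+\cdots+S(P_k,\vc)$, is the right pivot, and both inclusions are argued correctly: the $\supseteq$ direction by summing the defining inequalities, and the $\subseteq$ direction by the exchange argument (replacing a non-maximizing summand strictly increases the objective, contradicting optimality). The ``if'' direction is sound: from $\vv=\tfrac12(\vu+\vw)$, decomposing $\vu,\vw$ over the summands and using the termwise inequalities $\ip{\vv_i}{\vc}\ge\ip{\vu_i}{\vc}$, $\ip{\vv_i}{\vc}\ge\ip{\vw_i}{\vc}$ together with the averaging identity forces equality termwise, and uniqueness of each maximizer pins down $\vu_i=\vw_i=\vv_i$. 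The ``only if'' direction correctly invokes the standard fact that a vertex of a polytope is exposed by some $\vc$ with $S(P,\vc)=\{\vv\}$, applies the Minkowski-sum identity, and observes that a Minkowski sum of nonempty sets is a singleton only if every summand is a singleton (if some $S(P_j,\vc)$ contained two points, sliding just that coordinate would produce two distinct elements of the sum). From there each $\vv_i$, being a unique maximizer on $P_i$, is an extreme point, giving (i) and (ii) with a common $\vc$. One minor remark: condition (ii) already implies that each $\vv_i$ is extreme, so (i) in the statement is carrying only the bookkeeping $\vv=\sum_i\vv_i$; your proof respects this and is not circular. No gaps.
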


For the reader's convenience, we restate Theorem \ref{thm:zono} and Corollaries \ref{cor:vert-} and \ref{cor:mapping}.

\begin{customthm}{2} 
\label{thm:zono-app} 
Under Assumption~\ref{ass:big}, for $\vx \in \R^n$ such that $\mA^T \vx$ has all nonzero components, the point $\vv$ defined as
\begin{equation}
\label{eq:vert-app}
\vv \;=\; \vm(\vx) \;:=\; \mA\sign{\mA^T\vx}
\end{equation}
is a vertex of the zonotope $Z(\mA)$. 
\end{customthm}

\begin{proof}
Let $\vx\in\R^n$ satisfy the hypothesis of the theorem, and set $\vk=\sign{\mA^T\vx}$. Let $k_i$ denote the $i$th component of $\vk$, then $k_i = \sign{\ip{\va_i}{\vx}}$. If $k_i=1$, then $\ip{\va_i}{\vx}>0$. Recall the definition of $A_i$ from \eqref{eq:lineseg}, and note that $\ip{\va_i}{\vx}>0$ implies $S(A_i,\vx)=\{\va_i\}$. Similarly, if $k_i=-1$, then $S(A_i,\vx)=\{-\va_i\}$. By assumption, $k_i\neq 0$ for all $i$. Thus, $S(A_i,\vx)=\{k_i\va_i\}$ for all $i$. By Theorem \ref{thm:fukuda}, 
\begin{equation}
k_1\va_1 + \cdots + k_m\va_m \;=\; \mA\, \vk \;=\; \vm(\vx) \;=\; \vv
\end{equation}
is a vertex of the zonotope $A_1+\cdots+A_m=Z(\mA)$.
\end{proof}

\begin{customcor}{1}
\label{cor:vert-app}
If $\vv$ is a vertex of the zonotope $Z(\mA)$, then so is $-\vv$.
\end{customcor}

\begin{proof}
This follows immediately from Fukuda's result, Theorem \ref{thm:fukuda}. 
\end{proof}

\begin{customcor}{2} 
\label{cor:mapping-app}
Under Assumption~\ref{ass:big} and for $\vm(\vx)$ as in \eqref{eq:vert}, define $H\subset\R^n$ as
\begin{equation}
\label{eq:H-app}
H \;=\; \bigcup_{i=1}^m \{\, \vx\in \R^n\,\mid\, \ip{\va_i}{\vx} = 0 \,\}.
\end{equation}
The mapping $\vm:\R^n \setminus H\to \vertices{Z(\mA)}$ is well defined and onto.
\end{customcor}

\begin{proof}
By Theorem \ref{thm:zono}, the mapping is well defined. It remains to show $\vm$ is onto $\vertices{Z}$. By Theorem \ref{thm:fukuda} and $Z(\mA)$'s Minkowski decomposition \eqref{eq:zonomink}, for any $\vv\in \vertices{Z}$, 
\begin{equation}
\vv \;=\; \vv_1 + \cdots + \vv_m, 
\end{equation}
where $\{\vv_i\} = S(A_i,\vc)$ for $i=1,\dots,m$ and some $\vc \in \R^n$. For each $i$, we can write $\vv_i = k_i\va_i$, where $k_i=-1$ or $k_i=1$. Since $\{\vv_i\} = S(A_i,\vc)$, $\ip{k_i\va_i}{\vc}>0$, which implies $k_i = \sign{\ip{\va_i}{\vc}}$ and $\ip{\va_i}{\vc}\neq 0$ for all $i$. Thus, $\vc \in \R^n\setminus H$ and $\vm(\vc)=\vv$.
\end{proof}

\section{Proof of Theorem \ref{thm:normalCone}}
\label{app:A}

To prove Theorem \ref{thm:normalCone}, we need the following lemma, which characterizes the interior of the normal cone.

\begin{lemma} 
\label{lem:interiorNormal}
The interior of the normal cone $N_Z(\vv)$ can be written
\begin{equation}
\label{eq:S}
\interior{N_Z(\vv)} \;=\; S(\vv) \;:=\; \{\,
\vx\in\R^n \,\mid\, \ip{\vx}{\vz-\vv}< 0 
\mbox{ for all $\vz\in Z\setminus\{\vv\}$} 
\,\}.
\end{equation}
\end{lemma}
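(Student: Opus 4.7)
The plan is to prove the set equality by showing the two inclusions $S(\vv)\subseteq \interior{N_Z(\vv)}$ and $\interior{N_Z(\vv)}\subseteq S(\vv)$ separately, relying on the fact that $Z$ is a polytope (so it has only finitely many vertices and every $\vz\in Z$ is a convex combination of them).

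For $S(\vv)\subseteq \interior{N_Z(\vv)}$, I would take $\vx\in S(\vv)$ and enumerate the finitely many vertices $\vv_1,\dots,\vv_r$ of $Z$ distinct from $\vv$. Setting $\beta = \max_j \ip{\vx}{\vv_j-\vv}$, the defining inequality of $S(\vv)$ gives $\beta<0$. A Cauchy--Schwarz/triangle-inequality bound then produces an explicit radius $\rho>0$ such that any perturbation $\vy$ with $\|\vy\|<\rho$ satisfies $\ip{\vx+\vy}{\vv_j-\vv}\leq 0$ for every $j$. Because an arbitrary $\vz\in Z$ can be written $\vz = \sum_j \lambda_j \vv_j$ with $\lambda_j\geq 0$ and $\sum_j \lambda_j = 1$ (taking $\vv_0 = \vv$), the inequality $\ip{\vx+\vy}{\vz-\vv}\leq 0$ follows by convex combination. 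Hence $\vx+\vy\in N_Z(\vv)$ whenever $\|\vy\|<\rho$, proving $\vx\in\interior{N_Z(\vv)}$.

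For the reverse inclusion $\interior{N_Z(\vv)}\subseteq S(\vv)$, I would argue by contradiction. Suppose $\vx\in\interior{N_Z(\vv)}$ but there exists $\vz\in Z\setminus\{\vv\}$ with $\ip{\vx}{\vz-\vv}=0$ (equality is the only possibility, since the $\leq 0$ inequality already holds from membership in the normal cone). Then for any $\epsilon>0$ the perturbation $\vy = \epsilon(\vz-\vv)/\|\vz-\vv\|$ satisfies
\begin{equation}
\ip{\vx+\vy}{\vz-\vv} \;=\; \epsilon\,\|\vz-\vv\| \;>\; 0,
\end{equation}
so $\vx+\vy\notin N_Z(\vv)$. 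Because $\epsilon$ can be taken arbitrarily small, this contradicts $\vx$ lying in the interior of $N_Z(\vv)$. Therefore $\ip{\vx}{\vz-\vv}<0$ for every $\vz\in Z\setminus\{\vv\}$, i.e., $\vx\in S(\vv)$.

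Neither direction involves a substantial obstacle; the only subtlety is remembering to use finiteness of the vertex set (to produce the uniform gap $\beta<0$) in the first inclusion, and to use the strictness of interior-membership (to freely perturb $\vx$) in the second. Both reductions are routine once one exploits that $Z$ is a polytope and that the normal-cone inequality is preserved under convex combinations of vertices.
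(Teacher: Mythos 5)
Your proposal is correct, and it reaches the conclusion by a slightly different route than the paper. The paper first shows that the defining (uncountable) intersection $N_Z(\vv)=\bigcap_{\vz\in Z\setminus\{\vv\}}\{\vp\mid\ip{\vp}{\vz-\vv}\le 0\}$ can be replaced by the finite intersection over $\vz\in\vertices{Z}\setminus\{\vv\}$, and then invokes two topological facts: that the interior operator commutes with finite intersections, and that the interior of a closed half-space $\{\vp\mid\ip{\vp}{\vc}\le 0\}$ (with $\vc\ne\vecalt{0}$) is the corresponding open half-space. You instead argue both inclusions directly and analytically: for $S(\vv)\subseteq\interior{N_Z(\vv)}$ you exploit finiteness of the vertex set to obtain a uniform negative gap $\beta=\max_j\ip{\vx}{\vv_j-\vv}<0$, which yields an explicit perturbation radius; for $\interior{N_Z(\vv)}\subseteq S(\vv)$ you perturb along a slack direction $\vz-\vv$ to contradict interior membership. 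The core reduction is the same in both arguments---only the finitely many vertices matter, because every other point of $Z$ is a convex combination of them and the half-space inequality is preserved under convex combination---but your version packages it as a hands-on $\varepsilon$-perturbation argument rather than a structural statement about interiors of finite intersections. This makes your proof a bit more elementary and self-contained, at the cost of some explicit constants; the paper's proof is shorter once the commuting-interior lemma is taken for granted.
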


\begin{proof}
Note that
\begin{equation} 
\label{eqn:lem1}
N_{Z}(\vv) \;=\; \bigcap_{\vx \in Z\setminus \{\vv\}} \{\,
\vp\in\R^n \,\mid\, \ip{\vp}{\vx-\vv} \leq 0\,\}.
\end{equation}
This intersection ranges over uncountably many sets, and thus it is unclear if the interior commutes with intersection. However, we may show that 
\begin{equation} 
\label{eqn:finint}
N_{Z}(\vv) \;=\; \bigcap_{\vx \in \vertices{Z}\setminus \{\vv\}} \{\,
\vp\in\R^n \,\mid\, \ip{\vp}{\vx-\vv} \leq 0\,\}.
\end{equation}
Since the interior is commutative under finite intersection, 
\begin{equation} 
\label{eqn:lem2}
\begin{aligned}
\interior{N_{Z}(\vv)} 
&= 
\bigcap_{\vx \in \vertices{Z} \setminus \{\vv\}} \interior{\{\,\vp\in\R^n \,\mid\, \ip{\vp}{\vx-\vv} \leq 0\,\}} \\
&= 
\bigcap_{\vx \in \vertices{Z} \setminus \{\vv\}} \{\,\vp\in\R^n \,\mid\, \ip{\vp}{\vx-\vv} < 0\,\} \\
&= S(\vv),
\end{aligned}
\end{equation}
where (i) the second equality follows from continuity of the inner product and (ii) the last equality follows from \eqref{eq:S} and an equation analogous to (\ref{eqn:finint}). Thus, it suffices to show \eqref{eqn:finint}. By \eqref{eqn:lem1},
\begin{equation} 
\label{eqn:subineq}
N_{Z}(\vv) \;\subset\; 
\bigcap_{\vx \in \vertices{Z} \setminus \{\vv\}} \{\,\vp\in\R^n \,\mid\, \ip{\vp}{\vx-\vv} \leq 0\,\}.
\end{equation}
Choose $\vt$ and $\vx$ such that 
\begin{equation}
\vt \in \bigcap_{\vx \in \vertices{Z} \setminus \{\vv\}} \{\,\vp\in\R^n \,\mid\, \ip{\vp}{\vx-\vv} \leq 0\,\}, \qquad
\vx \in Z \setminus \{\vv\}.
\end{equation}
Since $Z$ is the convex hull of its vertices,
\begin{equation}
\vx \;=\; \sum_{\vv_i \in \vertices{Z}} \gamma_i \vv_i,
\end{equation}
where $\gamma_i\geq 0$ and $\sum_i\gamma_i=1$. Thus,
\begin{equation}
\ip{\vt}{\vx-\vv} 
\;=\; 
\left\langle 
\vt,\;\sum_{\vv_i \in \vertices{Z}} \gamma_i (\vv_i -\vv) \right\rangle 
\;=\; 
\sum_{\vv_i \in \vertices{Z}} \gamma_i\ip{\vt}{\vv_i -\vv} 
\;\leq\; 0,
\end{equation}
which implies
\begin{equation}\label{eqn:subineq2}
N_{Z}(\vv) \;\supset\; 
\bigcap_{\vx \in \vertices{Z} \setminus \{\vv\}} \{\,\vp \in\R^n \,\mid\, \ip{\vp}{\vx-\vv} \leq 0\,\}.
\end{equation}
Equation \eqref{eqn:finint} follows from \eqref{eqn:subineq} and \eqref{eqn:subineq2}.

\end{proof}

For the reader's convenience, we restate Theorem \ref{thm:normalCone}.

\begin{customthm}{3} 
\label{thm:normalCone-app}
Under Assumption~\ref{ass:big}, for $\vv\in\vertices{Z}$ and the mapping $\vm(\vx)$ from \eqref{eq:vert}, 
\begin{equation}
\vm^{-1}(\vv) \;=\; \interior{N_Z(\vv)}.
\end{equation}
\end{customthm}

\begin{proof}
By Lemma \ref{lem:interiorNormal}, it is enough to show that $S(\vv) = \vm^{-1}(\vv)$, where $S(\vv)$ is from \eqref{eq:S}. By Theorem \ref{thm:fukuda}, we can write 
\begin{equation}
\vv \;=\; \sum_{i=1}^{m} k_i\va_i,
\end{equation} 
where $k_i = 1$ or $-1$. Let $\vp \in S(\vv)$, fix $i$, and define
\begin{equation}
\vx \;=\; \sum_{\substack{j=1\\j \neq i}}^m k_j \va_j. 
\end{equation}
Then $\ip{\vp}{-k_i\va_i}=\ip{\vp}{\vx-\vv} <0$, by assumption. Thus, $k_i\ip{\vp}{\va_i} > 0$, and $\sign{\ip{\va_i}{\vp}} = k_i$. Therefore, $\vm(\vp) = \vv$. 

Suppose $\vp \in \vm^{-1}(\vv)$, and let $k_i = \sign{\ip{\va_i}{\vp}}$. Then $k_i\,\ip{\vp}{\va_i} > 0$, and for $\gamma < 0$, $\gamma\,k_i\,\ip{\vp}{\va_i} < 0$. Define $\gamma_i$ such that $\gamma_i\leq 0$ for all $i$ and $\gamma_i<0$ for some $i$. By linearity of the inner product, 
\begin{equation} \label{eqn:inequ}
\left\langle \vp,\;\sum_{i=1}^{m} \gamma_i \,k_i\,\va_i\right\rangle < 0.
\end{equation}
Let $\vx\in Z\setminus\{\vv\}$. Then 
\begin{equation}
\vx-\vv \;=\; \sum_{i=1}^{m} t_i\va_i - \sum_{i=1}^{m} k_i\va_i \;=\; \sum_{i=1}^{m} (t_i-k_i) \va_i,
\end{equation}
where $t_i \in [-1,1]$ for all $i$. If $k_i = 1$, then $t_i-k_i \leq 0$ and $t_i - k_i  = \gamma_i k_i$ for some $\gamma_i \leq 0$. If $k_i = -1$, then $t_i - k_i \geq 0$ and $t_i-k_i = \gamma_i k_i$, for some $\gamma_i \leq 0$. As $\vx \neq \vv$, $\gamma_i \neq 0$ for all $i$. Thus,
\begin{equation}
\vx-\vv \;=\; \sum_{i=1}^{m} \gamma_i\, k_i\,\va_i,
\end{equation}
where $\gamma_i \leq 0$ for all $i$ and $\gamma_i < 0$ for some $i$. By (\ref{eqn:inequ}), $\ip{\vp}{\vx-\vv} < 0$. As $\vx$ was arbitrary, $\vp \in S(\vv)$.
\end{proof}

\section{Proof of Theorem \ref{thm:bounds}}
\label{app:B}

For the reader's convenience, we restate Theorem \ref{thm:bounds}.

\begin{customthm}{4}
\label{thm:bounds-app}
Given $\varepsilon>0$ and $\delta>0$, let $K$ be the convex hull of a centrally symmetric set of points, and let $\vx_1,\dots,\vx_p$ be independent standard Gaussian vectors in $\mathbb{R}^n$. Define the subset $U_{K}\subseteq\vertices{K}$ dependent on $\delta$ as
\begin{equation}
U_{ K} \;=\; \{\, \vv\in\vertices{K} \,\mid\, \alpha_K(\vv)\geq \delta \,\},
\end{equation}
and define the event $A_K$ dependent on $\{\vx_i\}$ as
\begin{equation}
\label{eq:rv-app}
\{\vx_i\} \cap (N_K(\vv) \cup N_K(-\vv)) \not= \varnothing \mbox{ for all $\vv\in U_K$}.
\end{equation}
If $p$ is such that
\begin{equation}
p \;>\; \frac{
\log(|\vertices{K}|/\varepsilon)
}{
\log(1/(1-k))
},
\end{equation}
where
\begin{equation}
k \;=\; \left(
\frac{1}{2}\,(1-\sin(\arctan(b/\delta)))
\right)^{\frac{n-1}{2}},
\end{equation}
and
\begin{equation}
b \;\geq\; \underset{\vv\in\vertices{K}}{\max}
\diam{\base_K(\vv)},
\end{equation}
then $\mathbb{P}[A_K] \geq 1 - \varepsilon$.
\end{customthm}

\begin{proof}
Let $\vv\in\vertices{K}$. By Lemma 2.5 of Damle and Sun~\cite{damle2014random},
\begin{equation}
\alpha_K(\vv) 
\;\leq\; 
\frac{
\diam{\base_{K}(\vv)}
}{
\tan(\arcsin(1-\frac{1}{2}\,r(\omega_\vv)^2))
},
\end{equation}
where $\omega_\vv = \Px{N_{K}(\vv)}$ and $r(\omega_\vv) = 2(2\omega_\vv)^{\frac{1}{n-1}}$. This inequality holds for an upper bound $b$ of the set of diameters $\{\diam{\base_{K}(\vv)}\}$ with $\vv\in\vertices{K}$. Then, for $\vv\in\vertices{K}$ such that 
\begin{equation}
\omega_\vv \;\leq\;
\big(2[1-\sin(\arctan(b/\delta))]\big)^{\frac{n-1}{2}}/2^{n} \;=\; k,
\end{equation} 
we have that $\alpha_{K}(\vv) \leq \delta$.

For $\vv\in\vertices{K}$, consider the event that $\{\vx_i\}\cap (N_{K}(\vv)\cup N_K(\vv)) = \emptyset$; call this event $\{\text{miss } \vv\}$. Note that $\mathbb{P}[\{\text{miss } \vv \text{ with } \omega_\vv \geq k\}] \leq (1- 2k)^p$. Thus,
\begin{equation}
\mathbb{P}[ A_K^C ]
\;\leq\;
\mathbb{P}\left[
\bigcup_{\substack{\vv \in \vertices{K}\\ \omega_\vv \geq k}}\{\text{miss } \vv \}
\right] 
\;\leq\; 
\sum_{\substack{\vv \in \vertices{K}\\ \omega_\vv \geq k}} \mathbb{P}[\{\text{miss } \vv \}] 
\;\leq\; |\vertices{K}|\, (1- 2k)^p.
\end{equation}
Choosing $p$ such that $|\vertices{K}| (1- 2k)^p \leq \varepsilon$ or equivalently $p > \log(\frac{|\vertices{K}|}{\varepsilon}) / \log(\frac{1}{1-2k})$, we have $\mathbb{P}[A_K] \geq 1-\varepsilon$.
\end{proof}

\section{Proof of Theorem \ref{thm:hdist}}
\label{app:C}



For the reader's convenience, we restate Theorem \ref{thm:hdist}.

\begin{customthm}{5}
\label{thm:hdist-app}
Let $Z=Z(\mA)$ be a zonotope with generator $\mA\in\mathbb{R}^{n\times m}$ satisfying Assumption \ref{ass:big}. Given $\varepsilon>0$ and $\delta>0$, choose $p$ as in Theorem \ref{thm:bounds} for $b\geq\operatorname{diam}(Z)$, and let $V$ be the subset of $Z$'s vertices produced by Algorithm \ref{alg:vertenum} after $p$ iterations. Then
\begin{equation}
h(Z,\conv{V}) \;\leq\; \frac{|\vertices{Z}\setminus V|}{2}\, \delta
\end{equation} 
with probability at least $1-2^a\,\epsilon$, where $a=|\vertices{Z}\setminus U_{Z}|/2$ and 
\begin{equation}
U_{Z} \;=\; \{\,\vv \in \vertices{Z} \,\mid\, \alpha_Z(\vv) \geq \delta \,\}.
\end{equation}
\end{customthm}

\begin{proof}
Let $K_1$ and $K_2$ be convex hulls of two centrally symmetric subsets of $\vertices{Z}$, and define $A_{K_1}$ and $A_{K_2}$ as in \eqref{eq:rv}. By the law of total probability,
\begin{equation}
\Prob{A_{K_2}} \;=\; \Prob{A_{K_2}|A_{K_1}}\Prob{A_{K_1}}
+ \Prob{A_{K_2}|A_{K_1}^C}\Prob{A_{K_1}^C}.
\end{equation}
Therefore, applying Theorem \ref{thm:bounds},
\begin{equation}
\Prob{A_{K_2}|A_{K_1}}
\;=\; \frac{\Prob{A_{K_2}} - \Prob{A_{K_2}|A_{K_1}^C}\Prob{A_{K_1}^C}}{\Prob{A_{K_1}}} 
\;\geq\; \frac{1-2\varepsilon}{\Prob{A_{K_1}}}.
\end{equation}
Thus,
\begin{align}
\Prob{A_{K_1} \cap A_{K_2}}
&= \Prob{A_{K_2}|A_{K_1}}\Prob{A_{K_1}} \\
&\geq \frac{1-2\varepsilon}{\Prob{A_{K_1}}}\,\Prob{A_{K_1}} \\
&= 1-2\varepsilon
\end{align}
We can repeat this argument for a set $K_1,\dots,K_k$ of convex hulls of symmetric subsets of $\vertices{Z}$ to get
\begin{equation}
\Prob{\bigcap_{i=1}^{k} A_{K_i}} \;\geq\; 1- k\varepsilon.
\end{equation}
For notational convenience, we define $\pm N_K(\vv) = N_{K}(\vv) \cup N_K(-\vv)$. Let $\{K_i\}$ be the set of all convex hulls of symmetric subsets of $\vertices{Z}$ containing $U_{Z}$. In particular, $U_K\subset\vertices{K_i}$ for $i=1,\dots,|\{K_i\}|$, and  
\begin{equation}
|\{K_i\}| \;=\; 2^{\frac{|\vertices{Z}\setminus U_{K}|}{2}}.
\end{equation}
Consider a realization of $\vx_1,\dots,\vx_p$ such that the event $A_{K_i}$ holds for each $K_i$. Let $V \subset \vertices{Z}$ be the vertices such that $\{\vx_j\} \cap \pm N_{Z}(\vv) \neq \emptyset$ (i.e., the returned vertices), and define $n_V=|\vertices{Z}\setminus V|$. Order the set $\vertices{Z}\setminus V$ as $\{\,\vv_j \,\mid\, j=1,\dots,n_V\,\}$ such that $\vv_{j+1} = -\vv_j$ for odd $j$. Define 
\begin{equation}
C_i = \conv{\vertices{Z}\setminus \{\,\vv_j \,\mid\, j=1,\dots,2i\,\} },
\qquad
i = 0,\dots, n_V/2.
\end{equation} 
Note that
\begin{equation}
C_0 = Z, \qquad
C_{n_V/2} = \conv{V}.
\end{equation}
It is clear that $\{C_i\} \subset \{K_i\}$. Thus, $\{\vx_i\}$ satisfies $A_{C_i}$ for each $C_i$. Next we show that, for all $i$, 
\begin{equation}
\label{eqn:induction}
\alpha_{C_i}(\vv) < \delta
\mbox{ for all $\vv \in \{\vertices{Z}\setminus V\} \cap C_i$}.
\end{equation}
We show this first for $i=1$. Suppose by contradiction that for some $\vv \in \{\vertices{Z}\setminus V\} \cap C_1$, $\alpha_{C_1}(\vv) \geq \delta$. Then $\{\vx_j\} \cap \pm N_{C_1}(\vv)  \neq \emptyset$. But
\begin{equation}
\pm N_{C_1}(\vv) \subset
\big(
\pm N_{C_0}(\vv) \cup
\pm N_{C_0}(\vv_1)
\big).
\end{equation}
This follows since the normal cone is a subset of the dual space for which a vertex is the maximizing element of the convex set. Thus,
\begin{equation}
\{\vx_j\} \cap \big(
\pm N_{C_0}(\vv) \cup
\pm N_{C_0}(\vv_1)
\big)
\;\neq\; \varnothing,
\end{equation}
which is a contradiction as $C_0 =Z$. By a similar argument, we can show \eqref{eqn:induction} holds for any $C_i$ as
\begin{equation}
\begin{aligned}
\pm N_{C_i}(\vv)
&\subset
\pm N_{C_{i-1}}(\vv)
\cup
\pm N_{C_{i-1}}(\vv_i)\\
&\subset
\pm N_{C_0}(\vv) \cup \bigcup_{j=1}^{i} \pm N_{C_0}(\vv_j).
\end{aligned}
\end{equation}
By the definition \eqref{eq:simpconst}, \eqref{eqn:induction} shows that $h(C_i,C_{i+1}) \leq \delta$ for all $i$. Thus, 
\begin{equation}
\begin{aligned}
h(Z,\conv{V}) &= h(C_0, C_{n_V/2})\\
&\leq \sum_{i=0}^{n_V-1} h(C_i,C_{i+1})\\
&\leq n_V\, \delta. 
\end{aligned}
\end{equation}
Noting that
\begin{equation}
\Prob{\bigcap_{i=1}^{|\{K_j\}|} A_{K_i}} \;\geq\;
1 - 2^{\frac{|\vertices{Z}\setminus U_{Z}|}{2}}\,\epsilon
\end{equation}
completes the proof.
\end{proof}

\begin{acknowledgements}
The authors thank Al Charlesworth from Colorado School of Mines for helpful discussions. The first and third authors' work is supported by the U.S. Department of Energy Office of Science, Office of Advanced Scientific Computing Research, Applied Mathematics program under Award Number DE-SC-0011077. The second author's work is supported by NSF awards CCF-1149756 and CCF-093937. 
\end{acknowledgements}

\bibliographystyle{spmpsci}      
\bibliography{zonotopes}   

%
%

\end{document}